\def\pscal#1#2{\left\langle #1\,,\, #2 \right\rangle}
\def\nuint{\widetilde{\nu}}
\def\preciso#1{\widetilde #1}
\newcommand*{\chiut}[1][]{\chi^{#1}_{\{u > t\}}}
\newcommand*{\jump}[1]{\Theta_{#1}}
\newcommand{\A}{\boldsymbol{A}}
\newcommand{\B}{\boldsymbol B}
\newcommand{\bsmall}{\boldsymbol b}
\newcommand{\F}{\mathcal{F}}
\newcommand{\mean}[1]{\,-\hskip-1.1em\int_{#1}} 
\newcommand{\Leb}{\mathcal{L}} 
\def\R{\mathbb{R}}
\def\RN{\mathbb{R^N}}
\def\N{\mathbb{N}}
\def\Om{\Omega}
\def\DM{{\mathcal{DM}^{\infty}}}
\newcommand*{\DMloc}[1][\Omega]{\mathcal{DM}^{\infty}_{\rm loc}{(#1)}}
\def\sfera{{\mathchoice
{\setbox0=\hbox{$\displaystyle     \rm S$}\hbox{\raise0.5\ht0\hbox
to0pt{\kern0.35\wd0\vrule height0.45\ht0\hss}\hbox
to0pt{\kern0.55\wd0\vrule height0.5\ht0\hss}\box0}}
{\setbox0=\hbox{$\textstyle        \rm S$}\hbox{\raise0.5\ht0\hbox
to0pt{\kern0.35\wd0\vrule height0.45\ht0\hss}\hbox
to0pt{\kern0.55\wd0\vrule height0.5\ht0\hss}\box0}}
{\setbox0=\hbox{$\scriptstyle      \rm S$}\hbox{\raise0.5\ht0\hbox
to0pt{\kern0.35\wd0\vrule height0.45\ht0\hss}\raise0.05\ht0\hbox
to0pt{\kern0.5\wd0\vrule height0.45\ht0\hss}\box0}}
{\setbox0=\hbox{$\scriptscriptstyle\rm S$}\hbox{\raise0.5\ht0\hbox
to0pt{\kern0.4\wd0\vrule height0.45\ht0\hss}\raise0.05\ht0\hbox
to0pt{\kern0.55\wd0\vrule height0.45\ht0\hss}\box0}}}}
\def\q{{\mathchoice {\setbox0=\hbox{$\displaystyle\rm
Q$}\hbox{\raise
0.15\ht0\hbox to0pt{\kern0.4\wd0\vrule height0.8\ht0\hss}\box0}}
{\setbox0=\hbox{$\textstyle\rm Q$}\hbox{\raise
0.15\ht0\hbox to0pt{\kern0.4\wd0\vrule height0.8\ht0\hss}\box0}}
{\setbox0=\hbox{$\scriptstyle\rm Q$}\hbox{\raise
0.15\ht0\hbox to0pt{\kern0.4\wd0\vrule height0.7\ht0\hss}\box0}}
{\setbox0=\hbox{$\scriptscriptstyle\rm Q$}\hbox{\raise
0.15\ht0\hbox to0pt{\kern0.4\wd0\vrule height0.7\ht0\hss}\box0}}}}
\newcommand*{\BVLloc}[1][\Omega]{BV_{{\rm loc}}(#1)\cap L^{\infty}_{{\rm loc}}{(#1)}}
\newcommand*{\BVL}[1][\Omega]{BV(#1)\cap L^{\infty}{(#1)}}
\def\rel #1{{\overline #1}}
\DeclareMathOperator{\Tr}{Tr}
\newcommand*{\Trace}[3][\pm]{\Tr^{#1}(#2, #3)}
\newcommand*{\Trp}[2]{\Trace[+]{#1}{#2}}
\newcommand*{\Trm}[2]{\Trace[-]{#1}{#2}}
\def\vv{{\bf v}}
\def\F{{\mathcal F}}
\def\preciso #1{\widetilde #1}
\def\L{{\mathcal L}}
\newcommand{\LLU}{{\mathcal L}^1}
\def\M{{\mathcal M}}
\def\pallaunit{{\omega^{}_N}}
\def\RN{{\R^N}}
\DeclareMathOperator{\Div}{div}
\DeclareMathOperator{\spt}{spt}
\newcommand{\medint}{-\kern  -,375cm\int}
\newcommand{\medintinrigo}{-\kern  -,315cm\int}
 \newcommand{\hh}{{\mathcal H}^{N-1}}
\def\H{{\mathcal H}}
\newcommand{\LLN}{{\mathcal L}^N}
\newcommand{\Haus}[1]{{\mathcal H}^{#1}} 
\newcommand{\res}{\mathop{\hbox{\vrule height 7pt width .5pt depth 0pt
\vrule height .5pt width 6pt depth 0pt}}\nolimits} 
\newcommand{\ristretto}{\mathop{\hbox{\vrule height 7pt width .5pt depth 0pt
\vrule height .5pt width 6pt depth 0pt}}\nolimits} 
\def\pscal#1#2{\left\langle #1\,,\, #2 \right\rangle}
\DeclareMathOperator{\sign}{sign}
\def\ut{\widetilde{u}}
\newcommand*{\LDM}[1][\Omega]{\mathcal{DL}^1(#1)}
\newcommand*{\LDMloc}[1][\Omega]{\mathcal{DL}^1_{\text{loc}}(#1)}
\DeclareMathOperator{\cyl}{Cyl}
\newcommand*{\Cyl}[3]{\cyl(#1, #2; #3)}
\newtheorem{definition}{Definition}[section]
\newtheorem{lemma}[definition]{Lemma}
\newtheorem{theorem}[definition]{Theorem}
\newtheorem{proposition}[definition]{Proposition}
\newtheorem{corollary}[definition]{Corollary}
\theoremstyle{remark}
\newtheorem{remark}[definition]{Remark}
\def\@settitle{\begin{center}%
		\baselineskip14\p@\relax
		\bfseries
		\uppercasenonmath\@title
		\@title
		\ifx\@subtitle\@empty\else
		\\[5ex]
		\normalsize\mdseries\@subtitle
		\fi
	\end{center}%
}
\def\subtitle#1{\gdef\@subtitle{#1}}
\def\@subtitle{}
\begin{document}
\title[On the variational nature of the Anzellotti pairing]
{On the variational nature of the Anzellotti pairing}

\author[G.~Crasta]{Graziano Crasta}
\address{Dipartimento di Matematica ``G.\ Castelnuovo'', Univ.\ di Roma I\\
	P.le A.\ Moro 2 -- I-00185 Roma (Italy)}
\email{graziano.crasta@uniroma1.it}
\author[V.~De Cicco]{Virginia De Cicco}
\address{Dipartimento di Scienze di Base  e Applicate per l'Ingegneria, Univ.\ di Roma I\\
	Via A.\ Scarpa 10 -- I-00185 Roma (Italy)}
\email{virginia.decicco@uniroma1.it}

\keywords{Anzellotti's pairing, divergence-measure fields,
lower semicontinuity, relaxation}
\subjclass[2010]{28B05,46G10,26B30}

%
%

\date{October 7, 2024}

\begin{abstract}

In this paper we prove that the 
Anzellotti pairing can be regarded as a relaxed functional with respect to the weak$^\star$ convergence in the space $BV$ of functions of bounded variation.
The crucial tool is a preliminary integral representation of this pairing by means of suitable cylindrical averages.

\end{abstract}

\maketitle

\section{Introduction}
A classical problem in Calculus of Variations is to find minimal assumptions assuring the lower semicontinuity with respect to a suitable convergence of  integral functionals of the type
\begin{equation*}
J(u)=\int_\Omega f(x,u,\nabla u)\,dx,
\end{equation*}
where $\Omega$ is an open subset of $\R^N$ and $u\colon\Omega\to\R$ belongs to a given space of weakly differentiable functions. 
With this problem in mind,
it is well known that, if the integrand $f(x, s,\xi)$ admits a linear growth with respect to the gradient variable $\xi$
when $|\xi| \to +\infty$, the natural  functional framework 
is the space $BV$ of functions of bounded variation.

A classical result in this direction has been proved
by Dal Maso in \cite{DM1980}.
More precisely,
assuming that the integrand $f(x, s,\xi)$ is coercive, continuous and convex in the last variable, he introduced a proper lower semicontinuous extension of $J$ from $W^{1,1}$ to $BV$ and proved that it coincides with the integral representation of the relaxed functional of $J$.
If we drop the coercivity assumption on $f$, the task of studying the lower semicontinuity and of finding the relaxation of $J$ is highly non-trivial and requires some additional regularity assumption on $f$ in the $x$ variable (see for instance \cite{FL,dcl,DFV,DFV1,ADCF1,ADCF2,BouDM}). 

%

The aim of this paper is to investigate 
the possibility of new progress
in this area, by considering the model cases
\begin{gather*}
\mathcal{F}_\varphi(u)=\int_\Omega \varphi\ \bsmall(x,u)\cdot\nabla u\,dx\,,
\quad 
\mathcal{F}(u)=\int_\Omega |\bsmall(x,u)\cdot\nabla u|\,dx\,,
\quad
\varphi\in C^1_c(\Omega)\,,
\
u \in W^{1,1}(\Omega),
\end{gather*}
where the vector field $\bsmall\colon\Omega\times\R\to\R^N$
satisfies the assumptions listed in Section~\ref{ss:assumptions}.
To the authors' knowledge, previous results in literature (see e.g.\ \cite{RS,RS1}) 
are not applicable to $\mathcal{F}_\varphi$ and $\mathcal{F}$, being these functional strongly degenerate.

We remark that
all the results presented in this paper are new also 
in the case of a vector field $\bsmall$ 
independent of $u$. 

The lower semicontinuity of these functionals with respect to the $L^1$ convergence has been established in \cite{dcl} in the Sobolev space $W^{1,1}$ by requiring a very weak regularity assumption, i.e.\ that the divergence of the vector field $\bsmall(x,s)$ with respect to $x$ is an $L^1$ function. 
Our aim is to extend this result to the space $BV$, by considering a relaxed functional defined by an abstract relaxation procedure.

More precisely, for every 
fixed function $\varphi\in C^1_c(\Omega)$ 
and every open set $A\subseteq\Omega$, 
let us consider the functional $F_\varphi(\cdot, A):\BVLloc\to\R\cup\{+\infty\}$ defined by
\begin{equation}
\label{f:Fphi}
F_\varphi(u, A):=
\begin{cases}
\displaystyle\int_A \varphi\  \bsmall(x,u)\cdot\nabla u\,dx
& \text{if $u\in W^{1,1}_{{\rm loc}}(\Omega)$}\cap L_{{\rm loc}}^\infty(\Omega),
\\
+\infty
,
& \text{if}\ u\in (BV_{{\rm loc}}(\Omega)\
\setminus W_{{\rm loc}}^{1,1}(\Omega))\cap L_{{\rm loc}}^\infty(\Omega)\,,
\end{cases}
\end{equation}
and the associated relaxed functional
\begin{equation}
\label{relax1}
\rel{F_\varphi}(u, A)\!:=\inf\left\{\liminf_{n\to +\infty}F_\varphi(u_n, A)\!:\!
u_n\in W^{1,1}_{\rm loc}(\Omega)\cap L_{{\rm loc}}^\infty(\Omega), u_n\to u,
\ 
\text{$w^*$--}BV_{\rm loc}(\Omega)
\right\}\!.
\end{equation}
As it is customary,
this relaxed functional can be characterized as the greatest lower semicontinuous extension of  $F_\varphi$ to $BV$, less than or equal to $F_\varphi$. 
Besides this abstract definition, for the applications it is of paramount 
importance to
have an integral representation of $\rel{F_\varphi}$.
To this end,
the main difficulty is to find a precise representative for the singular part of the relaxed functional
(i.e., the representative where the measure $Du$ is singular). 
In the vectorial case the singular part of this functional is represented in an abstract way by a generalised surface energy density associated with $f$.

One of the main contributions of this paper
is to find an integral representation for $\rel{F}_\varphi(u,A)$. 
More precisely, in Theorem~\ref{t:limsupG} below we prove that, 
for every 
$u\in  \BVLloc$ 
and for every open set $A\subseteq\Omega$,
it holds that
\begin{equation}
\label{t:generalenuovo1}
\rel{F_\varphi}(u,A)=\int_A\varphi\ d(\bsmall(x,u),Du) \,,
\end{equation}
where $(\bsmall(x,u),Du)$ denotes the pairing measure defined in the recent paper \cite{CD4}. 
This measure extends the concept of pairing measure introduced by Anzellotti in the celebrated paper \cite{Anz} by establishing a pairing theory between 
weakly differentiable vector fields $\bsmall(x)$ and \(BV\) functions.
While the original definition of this measure starts from a  distributional viewpoint, our contribution shows that it can be regarded also in a variational sense as a relaxed functional.
In the authors' opinion,
this variational interpretation 
may attract the attention of the community
that deals with 
the $1$--Laplace operator,
both in the case of the associated Euler--Lagrange equations (see \cite{Mazon2016})
and in the study of the related Dirichlet problem with measure data (see \cite{LeonardiComi}).

Lower semicontinuity results and representation formulas for the relaxed functional in $BV(\Omega)$ have been obtained by many authors. 
In the already cited paper \cite{DM}, 
Dal Maso showed that, in order to prove lower semicontinuity,
in his general setting the coercivity assumption cannot be dropped.
In the spirit of the alternatives of Serrin
(see \cite{Serrin61}), in order to drop this assumption, 
Fonseca and Leoni in \cite{FL} assumed a uniform lower semicontinuity condition in $x$. Moreover, in \cite{ADCF1,ADCF2}, 
the authors required weak differentiability in $x$ and $BV$  in $x$ dependence, respectively. In these cases the precise representatives for the singular parts are the approximately continuous representative and the lower semicontinuous capacitary representative, respectively.

We remark that,
in the vectorial case (i.e., when $u$ is $\R^m$-valued, case that we do not handle),
the lower semicontinuity and the relaxation have been studied by many authors
under the quasi-convexity assumption with respect to the gradient variable
(see the fundamental paper \cite{FM} and the recent works \cite{RS,RS1}).
In all the mentioned papers, a coercivity (or partial coercivity)
assumption is required. 
It is well known that, in this setting,
the relaxation result can be obtained requiring only the bare
measurability in the space variable $x$.
The coercivity assumption is strictly related to the choice of the weak$^*$ convergence, since under this assumption every sequence that converges in $L^1$ 
converges also in the weak$^*$ topology.
On the other hand, 
although our results are confined to the scalar case,
this improvement by itself (the $L^1$ versus the weak$^*$ relaxation) is crucial in order to apply the results to problems involving very degenerate functionals, like the ones that arise for instance in the weighted least gradient problem. 
Furthermore, due to the lack of coercivity, 
the optimal finiteness domain of the relaxed functional could be larger than $BV$
(see \cite{CDS}).

\smallskip

Before describing in more details our results,
a few words on the pairing measure are in order.

The pairing theory was initially used to extend the validity of the
Gauss--Green formula to divergence-measure vector fields and to 
non-smooth domains (see \cite{Anz,BuCoMi,Cas,ChenFrid,ChTo,ChToZi,ComiLeonardi,ComiPayne,CD3,LeoSar}). 
Moreover, it can be considered 
as a useful abstract tool in several contexts, 
ranging from applications in the theory of hyperbolic systems of conservation and balance laws
(see \cite{ChFr1,ChenFrid,ChTo2,ChTo,ChToZi,CD2,Pan1} and the references therein)
to the theory of capillarity and 
in the study of the Prescribed Mean Curvature problem (see e.g.\ \cite{LeonardiComi,LeoSar,LeoSar2})
and in the context of continuum mechanics 
(see e.g.\ \cite{DGMM,Silh,Schu}).
Another field of application is related to
the Dirichlet problem for equations involving the $1$--Laplacian operator (see \cite{ABCM,AVCM,Cas,
HI,SchSch,SchSch2}).


In a recent paper \cite{CD4},
the authors introduced a nonlinear version of the pairing suitable for
applications to semicontinuity problems. 
This is the pairing appearing in the representation formula~\eqref{t:generalenuovo1}. The pairing $(\bsmall(x,u),Du)$ generalizes the Anzellotti pairing and inherits its properties. 
In particular, in that paper it is proved a characterization of
the normal traces of the vector field
$\bsmall(x,u(x))$
and the singular part of the pairing measure has been analyzed. 
Moreover, the authors established a generalized Gauss--Green formula.

\smallskip
Let us now describe the contents of the present paper.
We underline that all our new results have been obtained assuming
that the divergence of the vector field with respect to $x$ is an $L^1$ function (see Section~\ref{ss:assumptions} for the detailed list of assumptions on $\bsmall$).
We believe that this can be considered as a first step in order to study the general case with a divergence-measure vector field.

In Section~\ref{ss:coarea},
we prove a coarea formula for the measure $(\bsmall(x,u),Du)$
and its variation
(see Theorems~\ref{p:coarea} and~\ref{p:coareavar}).

Then, in Section~\ref{repres},
we show that the pairing $(\bsmall(x,u),Du)$ admits a representation of the form
\begin{equation}
\label{represent1}
\begin{split}
(\bsmall(\cdot,u), Du) = {} &
\bsmall(x,u)\cdot\nabla u\,\Leb^N
+ \Cyl{\bsmall_{\ut}}{\nu_u}{\cdot}\, |D^c u|\\ & 
+ \left(\mean{u^-}^{u^+} \Cyl{\bsmall_t}{\nu_u}{\cdot}\, dt\right)
\, |D^j u|,\qquad u\in \BVLloc\,,
\end{split}
\end{equation}
where $\bsmall_t := \bsmall(t, \cdot)$,
the approximate limit $\ut(x)$ of $u$ at $x$
and the normal vector $\nu_u(x)$ are defined 
in the first part of Section~\ref{s:prelim},
and
$\Cyl{\bsmall_t}{\nu_u}{\cdot}$
plays the role of a precise representative, defined by means of some cylindrical averages (see \eqref{defq} below). 
The above formula extends the representation formula for the pairing obtained by Anzellotti in the unpublished paper \cite{Anz2}
in the case of a vector field $\bsmall(x)$ independent of $u$.

In the same spirit, we prove a similar representation formula
\begin{equation}
\label{f:repr2}
\begin{split}
& (\bsmall(\cdot,u), Du)  =  
\bsmall(x,u)\cdot\nabla u\,\Leb^N
+
\Trace[]{\bsmall(\cdot,\widetilde u)}{\partial^* \{u>\widetilde u(x)\} }(x)|D^cu|\\
& +
\left(\mean{u^-(x)}^{u^+(x)}
\Trace[]{\bsmall(\cdot,t)}{\partial^* \{u>t\} }(x)
\, dt\right) |D^ju|,\quad u\in \BVLloc\,,
\end{split}
\end{equation}
based on the use of the weak normal traces as precise representatives
(see Section~\ref{distrtraces} for their definition).
This formula generalizes the representation obtained in the recent paper \cite{CCDM}
for vector fields $\bsmall(x)$ independent of $u$.

\smallskip
Sections~\ref{ss:lower} and~\ref{s:relax}
are devoted to the study of semicontinuity and relaxation.
We start, in Proposition~\ref{p:cont}, by proving that,
under suitable assumptions on $\bsmall$,
the functional
$G_\varphi(u) := \int_\Omega\varphi\ d(\bsmall(x,u),Du)$
is continuous with respect to the $L^1$ and weak${}^*$--BV convergence.
This result is used in Theorem~\ref{p:lsc} to prove the
lower semicontinuity of the pairing functionals
$F(u) := |(\bsmall(\cdot, u), Du)|(\Omega)$
and
$G^+(u) := (\bsmall(\cdot, u), Du)^+(\Omega)$.
Furthermore, the same continuity result
is used in Section~\ref{s:relax}
to prove the representation formula~\eqref{t:generalenuovo1} for the relaxed functional.
%
Clearly, this representation formula, coupled with
\eqref{represent1} or \eqref{f:repr2},
gives a full integral representation of the relaxed functional.
In order to achieve \eqref{t:generalenuovo1}, we need to prove 
those which in relaxation theory are called
the liminf and the limsup inequalities. 
The first one is a consequence of the semicontinuity result proved in Proposition~\ref{p:cont}, 
while the second one is obtained by using the blow-up method
developed in \cite{FL1}.

\section{Preliminaries}
\label{s:prelim}

Given $x_0\in\RN$ and $\rho>0$, $B_\rho(x_0)$ denotes the ball in $\RN$
centered in $x_0$ with radius $\rho$, while $\sfera^{N-1}$ is the
unit sphere of $\R^N$.

\noindent

In the following \(\Omega\) will always denote a nonempty open subset of \(\R^N\).
We denote by $\M(\Omega)$ the space of signed Radon measures
on $\Omega$.

\noindent
As usual,  ${\mathcal L}^{N}$ stands for the Lebesgue measure on $\RN$
and ${\H}^{k}$ for the $k$-dimensional Hausdorff
measure  on $\RN$.
The Lebesgue measure of the unit ball in $\RN$ is denoted by $\pallaunit$, hence
$\L^N(B_\rho(x_0))=\pallaunit\rho^N$.


Let \(u\in L^1_{{\rm loc}}(\Omega, \R^m)\).
We say that \(u\) has an approximate limit at $x_{0}\in\Omega$ 
if there exists \(z\in\R^m\) such that
\begin{equation*}
	\lim_{r\rightarrow0^{+}}\frac{1}{\LLN\left(  B_r(x_0)\right)}\int_{B_r\left(  x_{0}\right)
	}\left|  u(x)  -z  \right|  \,dx=0.
\end{equation*}
The set $C_u\subset\Omega$ of points where this property holds is called the
\textsl{approximate continuity set} of $u$, whereas
the set \(S_u := \Omega\setminus C_u\)
is called the \textsl{approximate discontinuity set} of $u$.
For any $x\in C_u$ the approximate limit $z$ is uniquely 
determined and is denoted by $z:=\widetilde{u}(x)$.

We say that \(x\in\Omega\) is an {\sl approximate jump point} of \(u\) if
there exist \(a,b\in\R^m\), \(a\neq b\), and a unit vector \(\nu\in\R^N\) such that 
\begin{equation}
\label{f:disc}
\begin{gathered}
\lim_{r \to 0^+} \frac{1}{\LLN(B_r^i(x))}
\int_{B_r^i(x)} |u(y) - a|\, dy = 0,
\\
\lim_{r \to 0^+} \frac{1}{\LLN(B_r^e(x))}
\int_{B_r^e(x)} |u(y) - b|\, dy = 0,
\end{gathered}
\end{equation}
where \(B_r^i(x) := \{y\in B_r(x):\ (y-x)\cdot \nu > 0\}\), and 
\(B_r^e(x) := \{y\in B_r(x):\ (y-x)\cdot \nu < 0\}\).
The triplet \((a,b,\nu)\), uniquely determined by \eqref{f:disc} 
up to a permutation
of \((a,b)\) and a change of sign of \(\nu\),
is denoted by \((u^+(x_0), u^-(x_0), \nu_u(x_0))\).
The set of approximate jump points of \(u\) will be denoted by \(J_u\).

The space $BV(\Om)$ is defined as the space of all functions 
$u\colon\Omega\to \R$
belonging to $L^1(\Om)$ whose distributional gradient $Du$ is an
$\RN$-valued Radon measure
with total variation $\vert Du\vert$ bounded in $\Om$.
We indicate by $D^a u $ and $D^s u$ the {\it absolutely continuous} and
the {\it singular part} of the measure $Du$ with respect to
the Lebesgue measure. We recall that $D^a u$ and $D^s u$ are mutually
singular, moreover we can write
$$
Du = D^a u + D^s u \qquad {\rm and} \qquad D^a u = \nabla u ~\L^N,
$$
where $\nabla u$ is the {\it Radon-Nikod\'ym derivative} of $D^au$ with respect to
the Lebesgue measure.
We denote by $Du = \nu_u\, |Du|$ the polar decomposition of $Du$.
In addition, it holds that
$$
D^su =  D^c u + D^j u,
\quad
\text{with}\
D^j u = (u^+-u^-)\nu_u~\H^{N-1}\ristretto{J_u}\,,
$$
where $J_u$ is a {\it countably $\H^{N-1}$-rectifiable} Borel set (see
\cite[Definition 2.57]{AFP}) contained in $S_u$, such that $\H^{N-1}(S_u\setminus J_u)=0$.
The remaining part $D^c u$ is called the {\it Cantor part} of $Du$.
We will also use the notation $D^d u:=D^a u + D^c u$ for the diffuse part of the measure $Du$.

A set $E\subset\Omega$ is of finite perimeter if 
its characteristic function $\chi_E$ belongs to $BV(\Omega)$.
If \(\Omega\subset\R^N\) is the largest open set such that \(E\)
is locally of finite perimeter in \(\Omega\),
we call \textsl{reduced boundary} \(\partial^* E\) of \(E\) the set of all 
points
\(x\in \Omega\) in the support of \(|D\chi_E|\) such that the limit
\[
\nuint_E(x) := \lim_{\rho\to 0^+} 
\frac{D\chi_E(B_\rho(x))}{|D\chi_E|(B_\rho(x))}
\]
exists in \(\R^N\) and satisfies \(|\nuint_E(x)| = 1\).
The function \(\nuint_E\colon\partial^* E\to \R^N\) is called
the \textsl{measure theoretic unit interior normal} to \(E\).

A fundamental result of De Giorgi (see \cite[Theorem~3.59]{AFP}) states that
\(\partial^* E\) is countably \((N-1)\)-rectifiable,
\(|D\chi_E| = \hh\res \partial^* E\),
and $\nuint_E$ coincides (up to the sign) with the normal
$\nu_{\partial^* E}$ defined in Section~\ref{distrtraces}.
Moreover, the measure theoretic interior normal can be choosen as normal vector
to $\partial^* E$, in the sense of Section \ref{distrtraces}.

If $u\in BV(\Omega)$, then the level set
$E_t := \{u > t\}$ is of finite perimeter for a.e.\ $t\in\R$,
and we can choose the sign of the normal vectors so that
$\nuint_{E_t}(x) = \nu_{\Sigma_{t}}(x) = \nu_u(x)$
for $\hh$-a.e.\ $x\in\Sigma_t$,
where $\Sigma_t := \partial^*\{u>t\}$.

Moreover, we can choose an orientation on $J_u$ such that
$u^+(x) > u^-(x)$ for every $x\in J_u$.
We also set $u^-(x)=u^+(x) := \ut(x)$ for every $x\in C_u$,
and $u^*(x) := [u^+(x) + u^-(x)]/2$  for every $x\in C_u \cup J_u$.

The measure $Du$ can be disintegrated on the level sets of $u$
using the following coarea formula
(see \cite[Theorem 4.5.9]{Fed}).

\begin{theorem}[Coarea formula]\label{coarea}
If $u\in BV(\Omega)$, then for $\LLU$--a.e.\ $t\in\R$ the set $\{u>t\}$ has finite perimeter in $\Omega$ and  
the following coarea formula holds
\begin{equation*}
\int_\Omega
g\,d|Du|=\int_{-\infty}^{+\infty}\!\!dt\!\int_{\partial^* \{u>t\}
\cap\Omega}\!g\,d\hh
=
\int_{-\infty}^{+\infty}\!\!dt\!
\int_{\{u^-\leq t\leq u^+ \}}\!g\,d\hh\,,
\end{equation*}
for every Borel function $g:\Omega\to[0,+\infty]$.
Moreover, for $\LLU$--a.e. $t\in\R$,
\begin{itemize}
\item[(a)]
 $\partial^*{\{u>t\}} \subset \{u^-\leq t\leq u^+ \}$,
\item[(b)] $\hh\Bigl(\{u^-\leq t\leq u^+ \}\setminus
\bigl(\partial^*\{u>t\}\bigr)\Bigr)=0
$,
\end{itemize}
and, in particular,
\begin{itemize}
\item[(a$^\prime$)]
$
\partial^* \{u > t\} \cap (\Omega\setminus S_u)
\subseteq \{x\in \Omega\setminus S_u\colon \ut(x) = t\}
$,
\item[(b$^\prime$)]
$\hh\Bigl(\{x\in \Omega\setminus S_u\colon \ut(x)=t \}\setminus
\bigl((\Omega\setminus S_u)\cap\partial^*\{u>t\}\bigr)\Bigr)=0$.
\end{itemize}
\end{theorem}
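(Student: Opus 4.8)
The statement is the classical coarea formula in $BV$, so my plan is to assemble it from standard structural facts rather than reprove it from scratch. First I would establish the measure disintegration: by the coarea formula in $BV$ (\cite[Theorem~4.5.9]{Fed}, see also \cite{AFP}) the set $\{u>t\}$ has finite perimeter for $\LLU$-a.e.\ $t$ and, as measures on the Borel subsets of $\Omega$,
\begin{equation*}
|Du|(B)=\int_{-\infty}^{+\infty}|D\chi_{\{u>t\}}|(B)\,dt .
\end{equation*}
Combining this with De Giorgi's structure theorem $|D\chi_{\{u>t\}}|=\hh\res\partial^*\{u>t\}$ and approximating an arbitrary Borel $g\ge0$ from below by simple functions (monotone convergence) yields the first equality. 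The second equality then follows at once from claims (a) and (b): for $\LLU$-a.e.\ $t$ the two domains of the inner integral differ by an $\hh$-negligible set and $g\ge0$, so the inner integrals coincide for a.e.\ $t$, hence so do the iterated integrals.

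For (a) I would argue pointwise, at a fixed $t$ for which $\{u>t\}$ has finite perimeter. Reading the approximate limits off the level-set densities, $u^-(x)=\sup\{s:\ \{u>s\}\ \text{has $(N-1)$-density $1$ at }x\}$ and $u^+(x)=\inf\{s:\ \{u>s\}\ \text{has density $0$ at }x\}$, while at every $x\in\partial^*\{u>t\}$ the set $\{u>t\}$ has density $1/2$. By monotonicity of $s\mapsto\chi_{\{u>s\}}$ the value $t$ can be neither $<u^-(x)$ (which would force density $1$) nor $>u^+(x)$ (density $0$), so $u^-(x)\le t\le u^+(x)$; this is (a). Intersecting with $\Omega\setminus S_u=C_u$, where $u^-=u^+=\ut$, immediately gives (a$'$).

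For (b), the reverse direction, I would invoke Federer's identification of the essential and reduced boundaries, $\hh(\partial^e\{u>t\}\setminus\partial^*\{u>t\})=0$. The same density characterization shows $\{u^-<t<u^+\}\subseteq\partial^e\{u>t\}\subseteq\{u^-\le t\le u^+\}$, so $\{u^-\le t\le u^+\}$ agrees with $\partial^*\{u>t\}$ up to $\hh$-null sets, except possibly on the endpoint sets $\{u^+=t\}\cup\{u^-=t\}$. On $J_u$ these endpoints cause no trouble: decomposing $J_u$ into $(N-1)$-rectifiable pieces of finite $\hh$ measure, on each piece $u^\pm$ pushes a finite measure forward to $\R$, which has at most countably many atoms, whence $\hh(\{x\in J_u:\ u^\pm(x)=t\})=0$ for $\LLU$-a.e.\ $t$. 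Restricting (b) to $C_u$ then produces (b$'$).

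The main obstacle is precisely the contribution of the approximate continuity set $C_u$ to (b). There $u^+=u^-=\ut$, the interior set $\{u^-<t<u^+\}$ is empty, and all of $\{\ut=t\}$ lies in the endpoint set, so the density/Federer argument above says nothing; one must instead show that for $\LLU$-a.e.\ $t$ the portion of $\{x\in C_u:\ \ut(x)=t\}$ at which $\{u>t\}$ degenerates to density $0$ or $1$ is $\hh$-negligible. I would extract this from the finer structure of $BV$ functions in \cite[\S4.5]{Fed} (equivalently, from the De Giorgi--Federer analysis underlying \cite{AFP}): on the $\hh$-$\sigma$-finite rectifiable pieces carrying $D^cu$, the blow-up of $u$ forces the level set $\{u>t\}$ to have density $1/2$ at $\hh$-a.e.\ point of $\{\ut=t\}$, so the degenerate set is null. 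This is the one step I expect to demand genuine work rather than bookkeeping.
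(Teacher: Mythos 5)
Your overall architecture is sound, and most of the steps are correct: the first equality via the $BV$ coarea formula plus De Giorgi's structure theorem, the deduction of the second equality from (a) and (b), the density characterization of $u^\pm$ giving (a) and (a$'$), and the atom-counting (pushforward) argument that disposes of the endpoint sets $\{u^\pm=t\}$ on $J_u$ for a.e.\ $t$. For calibration: the paper itself offers no proof of this theorem at all --- it is quoted from \cite[Theorem~4.5.9]{Fed} --- so you are attempting strictly more than the authors do, and everything above is a legitimate reduction of the statement to standard facts.

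The gap is exactly at the step you flagged, but it is worse than ``demanding genuine work'': the mechanism you propose rests on a false premise. You want to control the degenerate part of $\{x\in C_u:\ \ut(x)=t\}$ (the points where $\{u>t\}$ has density $0$ or $1$) by a blow-up argument ``on the $\hh$-$\sigma$-finite rectifiable pieces carrying $D^cu$''. No such pieces exist: the Cantor part has precisely the opposite property, namely $|D^cu|(B)=0$ for every Borel set $B$ that is $\sigma$-finite with respect to $\hh$ (see \cite[Proposition~3.92]{AFP}), so $D^cu$ is never concentrated on a countably $\hh$-rectifiable set unless it vanishes. Moreover, even granting some carrier, the degenerate set need not lie anywhere near the support of $D^su$: if $u$ is locally constant equal to $t$ on an open set, every point of that set lies in $\{\ut = t\}$ and is a density-$0$ point of $\{u>t\}$, giving a degenerate set of infinite $\hh$-measure for that particular $t$ --- which also shows that the claim is genuinely false at fixed $t$, so no pointwise blow-up argument can prove it; the a.e.-$t$ quantifier must enter the proof. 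What is actually needed is a Sard-type theorem for $BV$ functions: for $\L^1$-a.e.\ $t$, the set of approximate continuity points $x$ with $\ut(x)=t$ at which $\{u>t\}$ has density $0$ or $1$ is $\hh$-negligible. None of your ingredients (De Giorgi's theorem, Federer's essential-boundary theorem, the coarea disintegration) yields this; it is precisely the nontrivial content of \cite[Theorem~4.5.9]{Fed}, the very result the paper cites. So the honest repair is either to cite that theorem for this step --- at which point your argument reduces to the paper's citation plus bookkeeping --- or to supply a genuine proof of the Sard-type statement, which your sketch does not contain.
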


\subsection{Divergence-measure fields }
\label{ss:div}

We will denote by \(\DM(\Omega)\) the space of all
vector fields 
\(\A\in L^\infty(\Omega, \R^N)\)
whose divergence in the sense of distributions is a bounded Radon measure in \(\Omega\).
Similarly, \(\DMloc[\Omega]\) will denote the space of
all vector fields \(\A\in L^\infty_{{\rm loc}}(\Omega, \R^N)\)
whose divergence in the sense of distribution is a Radon measure in \(\Omega\). 
We set \(\DM = \DM(\R^N)\).
Moreover, we denote by $\LDM$ (resp.\ $\LDMloc$)
the subset of $\DM(\Omega)$ (resp.\ $\DMloc$)
of vector fields whose divergence is in $L^1(\Omega)$
(resp.\ $L^1_{\text{loc}}(\Omega)$).

We recall that, if \(\A\in\DMloc[\Omega]\), then \(|\Div\A| \ll \hh\)
(see \cite[Proposition 3.1]{ChenFrid}).
As a consequence, the set
\begin{equation*}
	\jump{\A} 
	:= \left\{
	x\in\Omega:\
	\limsup_{r \to 0+}
	\frac{|\Div \A| (B_r(x))}{r^{N-1}} > 0
	\right\},
\end{equation*} 
is a Borel set, \(\sigma\)-finite with respect to \(\hh\),
and the measure \(\Div \A\) can be decomposed as
\[
\Div\A = \Div^a\A + \Div^c\A + \Div^j\A,
\]
where \(\Div^a\A\) is absolutely continuous with respect to \(\LLN\),
\(\Div^c\A (B) = 0\) for every set \(B\) with \(\hh(B) < +\infty\),
and
\[
\Div^j\A = h\, \hh\res\jump{\A}
\]
for some Borel function \(h\)
(see \cite[Proposition~2.3]{ADM}).

\subsection{Anzellotti's pairing}
As in Anzellotti \cite{Anz} (see also \cite{ChenFrid}),
for every \(\A\in\DMloc\) and \(u\in\BVLloc\) we define the linear functional
\((\A, Du) \colon C^\infty_0(\Omega) \to \R\) by
\begin{equation*}
	\pscal{(\A, Du)}{\varphi} :=
	-\int_\Omega u^*\varphi\, d \Div \A - \int_\Omega u \, \A\cdot \nabla\varphi\, dx. 
\end{equation*}
The distribution \((\A, Du)\) is a Radon measure in \(\Omega\),
absolutely continuous with respect to \(|Du|\)
(see \cite[Theorem 1.5]{Anz} and \cite[Theorem 3.2]{ChenFrid}),
hence the equation
\begin{equation*}
	\Div(u\A) = u^* \Div\A + (\A, Du)
\end{equation*}
holds in the sense of measures in \(\Omega\).
Furthermore, Chen and Frid in \cite{ChenFrid} proved that the absolutely continuous part
of this measure with respect to the Lebesgue measure is given by
\(
(\A, Du)^a = \A \cdot \nabla u\, \LLN
\).

In \cite{Anz2} it is proved that,
for every \(\A\in \LDMloc\),
\[
(\A, Du) = \Cyl{\A}{\nu_u}{\cdot}\, |Du|\,,
\qquad \text{$|Du|$--a.e.\ in $\Omega$,}
\]
where $Du = \nu_u\, |Du|$ and
\begin{equation}\label{defq}
\Cyl{\A}{\nu_u}{x}
:= \lim_{\rho\downarrow 0}
\lim_{r\downarrow 0}
\frac{1}{\LLN(C_{r,\rho}(x, \nu_u(x)))}
\int_{C_{r,\rho}(x, \nu_u(x))} \A(y) \cdot \nu_u(x)\, dy
\end{equation}
and, for every $\zeta\in\sfera^{N-1}$,
\begin{equation*}
C_{r,\rho}(x, \zeta) :=
\left\{
y\in\R^N:\ 
|(y-x)\cdot\zeta| < r,\
|(y-x) - [(y-x)\cdot\zeta]\zeta| < \rho
\right\}.
\end{equation*}
(The existence of the limit in \eqref{defq} for $|Du|$--a.e.\ $x\in\Omega$ is part of the statement.)

As a consequence, it holds that
\[
\lim_{r\downarrow 0} \frac{(\A, Du) (B_r(x))}{|Du|(B_r(x))}
= \Cyl{\A}{\nu_u}{x}
\qquad
\text{for $|Du|$--a.e.\ $x\in\Omega$}.
\]
The interested reader can find the results of the unpublished paper \cite{Anz2},
proved in a more general setting,
in \cite[Section~4]{CCDM}.

\subsection{Weak normal traces on oriented countably $\Haus{N-1}$-rectifiable sets} \label{distrtraces}
We recall the notion of the traces of the normal component of a vector field \(\A\in 
\DMloc\) on an oriented countably \(\Haus{N-1}\)--rectifiable set
\(\Sigma\subset\Omega\),
introduced in 
\cite[Propositions~3.2, 3.4 and Definition~3.3]{AmbCriMan}.
In that paper the authors proved that there exist the normal traces $\Trp{\A}{\Sigma}, \Trm{\A}{\Sigma}$ belonging to
\(L^{\infty}(\Sigma, \Haus{N-1}\res\Sigma)\) 
and satisfying
\begin{equation}\label{f:trA}
\Div \A \res\Sigma =
\left[\Trp{\A}{\Sigma} - \Trm{\A}{\Sigma}\right]
\, {\mathcal H}^{N-1} \res\Sigma.
\end{equation}

In what follows we use the notation
\[
\Trace[*]{\A}{\Sigma}:= \frac{\Trp{\A}{\Sigma}+\Trm{\A}{\Sigma}}{2}\,.
\]
If $\A\in\LDMloc$, then for $\Haus{N-1}$-a.e. $x\in \Sigma$ we have
$
\Trp{\A}{\Sigma}(x)=\Trm{\A}{\Sigma}(x)
$
and
$
\Div \A \res\Sigma =0.
$
Therefore, in the following, when $\A\in\LDMloc$, we will use the notation $\Trace[]{\A}{\Sigma}$, instead of $\Trace[*]{\A}{\Sigma}$.

\subsection{Representation formulas for the pairing measure}
In the following theorem, the pairing is characterized in terms of normal 
traces of the field $\A$ on level sets of $u$.

\begin{theorem}[see \cite{CCDM}, Thm.~3.9]
\label{t:repr}
Let $\A \in \DMloc[\R^N]$ and $u\in\BVLloc[\R^N]$.
Then, the following equality holds in the sense of measures
\begin{equation}
\label{f:repr}
(\A, Du) =\left(
\mean{u^-(x)}^{u^+(x)}
\Trace[*]{\A}{\partial^* \{u>t\} }(x)
\, dt\right)|Du|\,,
\end{equation}
where we use the convention $\mean{a}^a f(t)\, dt := f(a)$.
Moreover, 
\begin{itemize}
	\item[(i)]
	absolutely continuous part: 
	\((\A, Du)^a = \A \cdot \nabla u\, \LLN\);
	
	\item[(ii)]
	jump part:
	$
(\A,Du)^j = \Trace[*]{\A}{J_u}(x)|D^j u|;
$

	\item[(iii)]
	Cantor part:
$
(\A, Du)^c =
\Trace[*]{\A}{\partial^* \{u>\widetilde u(x)\} }(x)|D^cu|.
$
\end{itemize}

\end{theorem}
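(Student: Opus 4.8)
The plan is to establish the identity first for characteristic functions of sets of finite perimeter, then to bootstrap to general $u$ by the coarea formula, and finally to read off the three parts by localisation.

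First I would treat $u = \chi_E$, with $E$ of finite perimeter, which is the heart of the matter. From the distributional definition, $\pscal{(\A, D\chi_E)}{\varphi} = -\int_\Omega \chi_E^*\,\varphi\, d\Div\A - \int_\Omega \chi_E\,\A\cdot\nabla\varphi\, dx$ for $\varphi\in C^\infty_0(\Omega)$. Since $\chi_E^* = 1$ on the density-one set $E^{(1)}$, $\chi_E^* = 0$ on $E^{(0)}$, and $\chi_E^* = \tfrac12$ on $\partial^* E$ (because there $u^+ = 1$, $u^- = 0$), and since $|\Div\A|\ll\hh$ so that these three sets carry all the mass of $\Div\A$, I split the first integral into its contributions over $E^{(1)}$ and over $\partial^* E$. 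On $E^{(1)}$ the one-sided Gauss--Green formula for $\A$ turns $-\int_{E^{(1)}}\varphi\, d\Div\A - \int_E\A\cdot\nabla\varphi\, dx$ into $\int_{\partial^* E}\Trp{\A}{\partial^* E}\,\varphi\, d\hh$, while on $\partial^* E$ formula \eqref{f:trA} gives $-\tfrac12\int_{\partial^* E}\varphi\, d\Div\A = -\tfrac12\int_{\partial^* E}[\Trp{\A}{\partial^* E}-\Trm{\A}{\partial^* E}]\varphi\, d\hh$. Adding the two contributions, the factor $\tfrac12$ symmetrises the interior and exterior traces and produces exactly $\int_{\partial^* E}\Trace[*]{\A}{\partial^* E}\,\varphi\, d\hh$; hence $(\A, D\chi_E) = \Trace[*]{\A}{\partial^* E}\,|D\chi_E|$. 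This is \eqref{f:repr} for $u = \chi_E$, since $u^- = 0$, $u^+ = 1$ on $\partial^* E$ and the averaged trace collapses to $\Trace[*]{\A}{\partial^* E}$.

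Then, for general $u$ (without loss of generality $u\ge 0$ and bounded), I would use the layer-cake identities $u = \int_0^{+\infty}\chi_{\{u>t\}}\, dt$ and $Du = \int_0^{+\infty} D\chi_{\{u>t\}}\, dt$. Because the distributional pairing is linear in $u$, I expect $(\A, Du) = \int_0^{+\infty}(\A, D\chi_{\{u>t\}})\, dt$, which by the previous step equals $\int_0^{+\infty}\Trace[*]{\A}{\partial^*\{u>t\}}\,\hh\res\partial^*\{u>t\}\, dt$. By the coarea formula (Theorem~\ref{coarea}) the measure $|Du|$ disintegrates as $\int_\R \hh\res\partial^*\{u>t\}\, dt$, so the disintegration theorem identifies the density of the measure just obtained with respect to $|Du|$ as the conditional average of $t\mapsto\Trace[*]{\A}{\partial^*\{u>t\}}(x)$ over the fibre $\{t: x\in\partial^*\{u>t\}\}$, which by parts (a)--(b) of Theorem~\ref{coarea} equals the interval $[u^-(x), u^+(x)]$. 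This is precisely the master formula \eqref{f:repr}.

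Finally, I would read off the three parts. Part (i) is already recorded ($(\A, Du)^a = \A\cdot\nabla u\,\LLN$, from \cite{ChenFrid}). For (ii) I restrict \eqref{f:repr} to $J_u$: at $\hh$-a.e.\ jump point $x$ the level sets $\partial^*\{u>t\}$ share the normal $\nu_u(x)$ and the same trace for every $t\in(u^-(x), u^+(x))$, so the average collapses to $\Trace[*]{\A}{J_u}(x)$ and $(\A, Du)^j = \Trace[*]{\A}{J_u}(x)\,|D^j u|$. For (iii) I restrict to the Cantor part, which lives on $\Omega\setminus S_u$ where $u^- = u^+ = \ut(x)$; the convention $\mean{a}^{a} f(t)\,dt = f(a)$ gives $\Trace[*]{\A}{\partial^*\{u>\ut(x)\}}(x)$, and properties (a$'$)--(b$'$) ensure $x\in\partial^*\{u>\ut(x)\}$ for $|D^c u|$-a.e.\ $x$. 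The main obstacle is the rigorous justification of the interchange in the second step: since the pairing tests the \emph{precise} representative $u^*$ against $\Div\A$, one must verify that $\int_0^{+\infty}\chi_{\{u>t\}}^*\, dt = u^*$ holds $|\Div\A|$-a.e.\ --- in particular on $J_u$, where the values $\tfrac12$ must reconstruct $u^* = (u^++u^-)/2$ --- and one must check the measurability and local integrability in $t$ of $t\mapsto\Trace[*]{\A}{\partial^*\{u>t\}}$ on this rectifiable family of level sets; the exceptional $t$ for which $\{u>t\}$ fails to have finite perimeter form a Lebesgue-null set by Theorem~\ref{coarea} and are harmless.
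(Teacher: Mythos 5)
You should first be aware that the paper itself contains no proof of Theorem \ref{t:repr}: it is imported from \cite{CCDM} (Thm.~3.9), so there is no internal argument to compare against. The closest internal analogue is the proof of Theorem \ref{t:repr3}, where the same formula for the nonlinear pairing $(\bsmall(\cdot,u),Du)$ is obtained by precisely your strategy: the characteristic-function case, then the coarea formula for the pairing, then conversion of the iterated integral into a density with respect to $|Du|$ via Theorem \ref{coarea} and Fubini, splitting a Borel set $B$ into $B\setminus J_u$ and $B\cap J_u$. So your route is the standard one in this literature, and it even has a structural advantage over the paper's internal logic: the paper presents the characteristic-function case (Corollary \ref{c:paironch}) as a \emph{consequence} of Theorem \ref{t:repr}, whereas you prove it independently from the one-sided Gauss--Green formula together with \eqref{f:trA}, which is the correct order for a self-contained argument. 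Your handling of the layer-cake interchange (verifying $\int_0^{+\infty}\chi^*_{\{u>t\}}\,dt=u^*$ on $C_u\cup J_u$, then using $|\Div\A|\ll\hh$ and $\hh(S_u\setminus J_u)=0$) is exactly the point that makes the coarea step legitimate for a field whose divergence is a measure, and you correctly identify it as the delicate step.

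The one claim you assert rather than prove is the collapse of the average in part (ii): that for $\hh$-a.e.\ $x\in J_u$ one has $\Trace[*]{\A}{\partial^*\{u>t\}}(x)=\Trace[*]{\A}{J_u}(x)$ for a.e.\ $t\in(u^-(x),u^+(x))$. This does not follow from the master formula \eqref{f:repr} alone; it requires the \emph{locality} property of weak normal traces: if two oriented countably $\hh$-rectifiable sets carry the same orientation on their intersection, then the traces of $\A$ on them agree $\hh$-a.e.\ on that intersection. One applies this to $\Sigma_1=\partial^*\{u>t\}$ and $\Sigma_2=J_u$, recalling that, up to $\hh$-null sets, $\{x\in J_u\colon u^-(x)<t<u^+(x)\}\subset\partial^*\{u>t\}$ with $\nu_{\{u>t\}}=\nu_u$ there (Theorem \ref{coarea}(b) and the orientation convention of the Preliminaries), and then runs a Fubini argument in $(x,t)$. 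This locality lemma is available in \cite{AmbCriMan} and is used in \cite{CD3,CDM}, so the gap is one of citation rather than substance, but as written ``the level sets share the same trace'' is precisely what remains to be shown. A milder remark of the same kind applies to the measurability in $t$ of $t\mapsto\Trace[*]{\A}{\partial^*\{u>t\}}$, which you flag yourself and which is settled by identifying this function with the density of the measure $(\A,D\chi_{\{u>t\}})$ with respect to $\hh\res\partial^*\{u>t\}$.
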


\begin{corollary}\label{c:paironch}
Let \(\A\in\DMloc\), and let $E\subseteq \Omega$ be a set of finite perimeter,
with $\overline{E}\subset\Omega$.
Then
\[
(\A,D\chi_E)=\Trace[*]{\A}{\partial^* E}(x)\hh\res\partial^* E.
\]
\end{corollary}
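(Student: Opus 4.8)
The plan is to deduce the corollary directly from Theorem~\ref{t:repr} by specializing the representation formula to the function $u=\chi_E$. The first step is to record the $BV$ structure of a characteristic function of a set of finite perimeter: since $\chi_E$ takes only the values $0$ and $1$, it has neither an absolutely continuous nor a Cantor part, $J_{\chi_E}=\partial^*E$, and, with the orientation for which $u^+>u^-$, one has $u^+\equiv1$ and $u^-\equiv0$ on $\partial^*E$. Consequently $D\chi_E=D^j\chi_E$ and $|D\chi_E|=(u^+-u^-)\,\hh\res J_{\chi_E}=\hh\res\partial^*E$, in accordance with De Giorgi's theorem.

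With this in hand I would apply~\eqref{f:repr}. For every $t\in(0,1)$ the superlevel set is $\{\chi_E>t\}=E$, hence $\partial^*\{\chi_E>t\}=\partial^*E$ for all such $t$. At each $x\in\partial^*E$ the inner integral therefore reduces to the average of a function that is constant in $t$,
\[
\mean{u^-(x)}^{u^+(x)}\Trace[*]{\A}{\partial^*\{\chi_E>t\}}(x)\,dt
=\mean{0}^{1}\Trace[*]{\A}{\partial^*E}(x)\,dt
=\Trace[*]{\A}{\partial^*E}(x),
\]
and multiplying by $|D\chi_E|=\hh\res\partial^*E$ yields the claim. Equivalently, and even more directly, one may invoke item~(ii) of Theorem~\ref{t:repr}: since $D\chi_E$ is purely of jump type, $(\A,D\chi_E)=(\A,D\chi_E)^j=\Trace[*]{\A}{J_{\chi_E}}(x)\,|D^j\chi_E|=\Trace[*]{\A}{\partial^*E}(x)\,\hh\res\partial^*E$.

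The only point requiring attention is that Theorem~\ref{t:repr} is stated for fields in $\DMloc[\R^N]$ and functions in $\BVLloc[\R^N]$, whereas here $\A\in\DMloc$ and $E\subset\Omega$. This is harmless because the hypothesis $\overline{E}\subset\Omega$ forces $\partial^*E\subset\spt|D\chi_E|\subset\overline{E}\subset\Omega$, so that both the pairing measure and the normal trace $\Trace[*]{\A}{\partial^*E}$ are determined by the values of $\A$ in a neighborhood of the compact set $\overline{E}$. Thus I would fix a cutoff $\eta\in C^\infty_c(\Omega)$ equal to $1$ near $\overline{E}$ and apply the theorem to $\eta\A\in\DMloc[\R^N]$ (extended by zero), noting that $\eta\A=\A$ and $\Div(\eta\A)=\Div\A$ near $\overline{E}$, whence $(\eta\A,D\chi_E)=(\A,D\chi_E)$ and the two normal traces on $\partial^*E$ coincide. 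I do not expect any genuine obstacle here: the entire substance is already contained in Theorem~\ref{t:repr}, and the remaining work is the elementary observation that a characteristic function is purely of jump type, together with this routine localization.
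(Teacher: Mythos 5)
Your proposal is correct and is precisely the argument the paper intends: the corollary is stated without proof as an immediate consequence of Theorem~\ref{t:repr}, obtained by taking $u=\chi_E$, noting that $D\chi_E$ is purely of jump type with $J_{\chi_E}=\partial^*E$, $u^+=1$, $u^-=0$, $|D\chi_E|=\hh\res\partial^*E$, and $\partial^*\{\chi_E>t\}=\partial^*E$ for $t\in(0,1)$. Your additional cutoff argument handling the passage from $\DMloc[\R^N]$ to $\DMloc[\Omega]$ is a routine but legitimate point of care that the paper leaves implicit.
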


\section{Assumptions on  the vector field \(\bsmall\)}
\label{ss:assumptions}

Let \(\bsmall\colon \Omega\times \R \to \R^N\) be a function
satisfying the following assumptions:
\begin{itemize}
\item[(i)]
\(\bsmall\) is a locally bounded Borel function;

\item[(ii)]
the function \(\bsmall(x, \cdot)\) is Lipschitz continuous in \(\R\),
uniformly with respect to $x$, i.e.\
there exists a constant $L>0$ such that
\[
|\bsmall(x,t) - \bsmall(x, s)| \leq L\, |t-s|,
\qquad
\forall t,s\in\R,\
\text{for \(\LLN\)--a.e.\ \(x\in\Omega\)}\,;
\]

\item[(iii)]
for every \(t\in\R\), \(\bsmall_t := \bsmall(\cdot, t)\in\LDMloc\);

\item[(iv)]
the least upper bound
\begin{equation}
\label{f:sigma}
\sigma := \sup_{t\in \R} |\Div_x \bsmall_t|
\end{equation}
belongs to $L^1_{\rm loc}(\Omega)$.
\end{itemize}

\smallskip
We remark that, at the price of some additional
technicality, 
assumption (iv) could be replaced by the weaker assumption
\begin{itemize}
\item[(iv$^\prime$)]
for every $m>0$,
the least upper bound
\[
\sigma_m := \sup_{|t|\leq m} |\Div_x \bsmall_t|
\]
belongs to $L^1_{\rm loc}(\Omega)$.

\end{itemize}

The results of Section~\ref{ss:coarea} 
will be mainly proved replacing (ii) with the following weaker assumption:
\begin{itemize}
\item[(ii$^\prime$)]
for \(\LLN\)--a.e.\ \(x\in\Omega\),
the function \(\bsmall(x, \cdot)\) is continuous in \(\R\).
\end{itemize}

\bigskip

Let us extend $\bsmall$ to be $0$ in $(\R^N\setminus\Omega)\times\R$,
so that the vector field
\begin{equation}\label{f:B}
\B(x,t) := \int_0^t \bsmall(x,s)\, ds,
\qquad x\in\R^N,\ t\in\R,
\end{equation}
is defined for all $(x,t)\in\R^N\times\R$.
Moreover \(\B(x,0) = 0\) for every \(x\in\R^N\)
and, from (ii$^\prime$), for every \(x\in\R^n\)
one has \(\bsmall(x,t) = \partial_t \B(x,t)\) for every \(t\in\R\).

The next theorem has been proved,
in a more general setting,
in \cite[Theorem~4.3]{CD4}.
To help the reader comparing \cite[formula (29)]{CD4}
with formula~\eqref{lolo} below,
we observe that
the quantity $F(x,t)\sigma$ in \cite{CD4}
coincides with
$(\Div_x\B)(x,t)\, \LLN$ 
in the present paper.

\begin{theorem}
\label{t:chainb4}
Let $\bsmall$
satisfy assumptions (i)-(ii$^\prime$)-(iii)-(iv),
let \(\B\) be defined by~\eqref{f:B},
and let $u\in  \BVLloc$. 
Then the distribution $(\bsmall(\cdot,u), Du)$,
defined by
\begin{equation}
\label{f:bxu}
\begin{split}
 \langle(\bsmall(\cdot,u), Du),\varphi\rangle  := {} &
-\int_\Omega
\varphi(x)\, (\Div_x \B) (x,u(x)) \, dx
\\ & -\int_\Omega \B(x,u(x))\cdot\nabla\varphi(x)\, dx,
\qquad \forall\varphi\in C_c^\infty(\Omega),
\end{split}
\end{equation}
is a Radon measure in $\Omega$, and satisfies
\begin{equation}\label{f:mubdd}
|(\bsmall(\cdot,u), Du)|(E)\leq
\|\bsmall\|_{L^\infty(K, \R^N)}
|Du|(E),
\qquad
\text{for every Borel set}\
E\Subset\Omega\,,
\end{equation}
where $K := \overline{E} \times \left[-\|u\|_{L^\infty(\overline{E})}, \|u\|_{L^\infty(\overline{E})}\right]$.

In other words,
the composite function $\vv\colon\Omega\rightarrow\R^N$,
defined by
$\vv(x):=\B(x,u(x))$,
belongs to $L^\infty_{\rm loc}(\Omega, \R^N)$,
and the following equality holds in the sense of measures:
\begin{equation}\label{lolo}
\begin{split}
\Div\,\vv =
(\Div_x\B)(x,u(x))\, \LLN
+(\bsmall(\cdot,u), Du).
\end{split}
\end{equation}
\end{theorem}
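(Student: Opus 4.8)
The plan is to establish Theorem~\ref{t:chainb4} by first treating a regularized version of $u$ where the chain rule is classical, and then passing to the limit using the stability properties of the pairing. Concretely, I would begin by assuming $u\in W^{1,1}_{\rm loc}(\Omega)\cap L^\infty_{\rm loc}(\Omega)$, where the composite field $\vv(x)=\B(x,u(x))$ is differentiable in a Sobolev sense: by the chain rule for $\B$, which is $C^1$ in $t$ with $\partial_t\B=\bsmall$ thanks to (ii$^\prime$), one gets $\Div\vv=(\Div_x\B)(x,u(x))+\bsmall(x,u(x))\cdot\nabla u$ pointwise a.e. Here the absolutely continuous term $\bsmall(x,u(x))\cdot\nabla u\,\LLN$ is exactly what the pairing $(\bsmall(\cdot,u),Du)$ should reduce to on the Sobolev class, matching \eqref{lolo}. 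The integrability of $(\Div_x\B)(x,u(x))$ is controlled by assumption (iv): since $|\partial_{x_i}\B(x,t)|=\bigl|\int_0^t\partial_{x_i}\bsmall(x,s)\,ds\bigr|$ and the divergence is dominated by $|t|\,\sigma(x)$, one obtains $(\Div_x\B)(x,u(x))\in L^1_{\rm loc}$ on sets where $u$ is bounded.

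Next I would verify that the distribution defined by \eqref{f:bxu} is well posed and satisfies \eqref{lolo} in general. The key point is that $\vv=\B(\cdot,u)\in L^\infty_{\rm loc}(\Omega,\R^N)$: this follows from the local boundedness of $\bsmall$ in (i) and the local boundedness of $u$, since $|\B(x,t)|\le |t|\sup_{|s|\le|t|}|\bsmall(x,s)|$. Consequently $\Div\vv$ is a well-defined distribution, and \eqref{lolo} becomes simply the \emph{definition} of $(\bsmall(\cdot,u),Du)$ as the difference $\Div\vv-(\Div_x\B)(x,u(x))\,\LLN$; the content of the theorem is that this difference is a Radon measure satisfying the bound \eqref{f:mubdd}. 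To prove it is a measure, I would approximate $u$ by smooth functions $u_n$ (for instance by mollification, or via the truncated smoothings used in \cite{CD4}), so that $(\bsmall(\cdot,u_n),Du_n)=\bsmall(x,u_n)\cdot\nabla u_n\,\LLN$, and then pass to the limit in the distributional identity \eqref{f:bxu}, exploiting the continuity of $t\mapsto\B(x,t)$ and the dominated convergence granted by (iv).

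The estimate \eqref{f:mubdd} is the crucial quantitative step and I expect it to be the main obstacle. For a smooth test function $\varphi$ supported in a set $E\Subset\Omega$, I would bound $|\langle(\bsmall(\cdot,u),Du),\varphi\rangle|$ by splitting \eqref{f:bxu} and integrating by parts back, aiming to produce a factor $\|\bsmall\|_{L^\infty(K,\R^N)}\int|\varphi|\,d|Du|$. The difficulty is that $Du$ may have singular parts (Cantor and jump), so the naive pointwise chain-rule bound does not directly apply; instead one must use the structure of $\B$ as an antiderivative in $t$ together with the disintegration of $|Du|$ over level sets. A clean route is the coarea formula of Theorem~\ref{coarea}: writing the action of $\Div\vv$ through the level sets $\{u>t\}$ and using the normal-trace representation of Theorem~\ref{t:repr} for the frozen fields $\bsmall_t\in\LDMloc$, one controls each level contribution by $\|\bsmall_t\|_\infty\,\hh\res\partial^*\{u>t\}$ and integrates in $t$ to recover $|Du|$. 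This is where assumptions (iii) and (iv) enter essentially, ensuring each $\bsmall_t$ has an $L^1$ divergence so that $\Trp{\bsmall_t}{\Sigma}=\Trm{\bsmall_t}{\Sigma}$ and no spurious jump contribution arises from the field itself.

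Finally, having the bound \eqref{f:mubdd} on a generating family of sets, a standard density and monotone-class argument extends it to all Borel sets $E\Subset\Omega$, and the Riesz representation theorem upgrades the distribution $(\bsmall(\cdot,u),Du)$ to a genuine Radon measure that is absolutely continuous with respect to $|Du|$. The identity \eqref{lolo} then holds in the sense of measures by construction, completing the proof. The overall strategy thus mirrors the classical Anzellotti argument for fields independent of $u$, with the extra work concentrated in handling the $t$-dependence of $\bsmall$ through the antiderivative $\B$ and in verifying that the Lipschitz/continuity hypotheses (ii)--(ii$^\prime$) combined with the uniform divergence control (iv) suffice to pass all limits.
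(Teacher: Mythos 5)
First, a point of reference: the paper itself gives no proof of Theorem~\ref{t:chainb4} --- it is imported verbatim from \cite{CD4} --- so your attempt can only be measured against that source and against the machinery the paper builds around it. Your overall architecture (regularize, use the Sobolev-case identity, pass to the limit, then upgrade the distribution to a measure via a uniform bound and Riesz representation) is indeed the strategy of \cite{CD4}. However, your first paragraph contains a genuine gap: you assert that for $u\in W^{1,1}_{\rm loc}(\Omega)\cap L^\infty_{\rm loc}(\Omega)$ the identity $\Div\vv=(\Div_x \B)(x,u(x))+\bsmall(x,u(x))\cdot\nabla u$ holds ``pointwise a.e.'' by the classical chain rule, invoking only the $C^1$ dependence of $\B$ on $t$. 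This is not a classical computation: under (i)--(iv) the field $\B(\cdot,t)$ is merely bounded and Borel in $x$ with $L^1$ divergence, so the composite $x\mapsto \B(x,u(x))$ need not be weakly differentiable, and the identity is a distributional one whose proof is precisely the nonautonomous chain rule of De Cicco--Leoni \cite{dcl} (or requires smoothing $\bsmall$ in the $x$ variable, in the spirit of Theorem~\ref{t:convolu}, with dominated convergence via (iv)). Since this Sobolev-case identity is the heart of the theorem, you must either cite \cite{dcl} explicitly or supply that approximation argument; as written, the step is unjustified.

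The good news is that the ``clean route'' you sketch in your third paragraph is not merely a patch for the estimate \eqref{f:mubdd}: it is a complete, self-contained proof that bypasses both the Sobolev chain rule and the mollification of $u$, and it is essentially the computation the paper itself performs to prove Theorem~\ref{p:coarea}. Starting from the definition in the form \eqref{f:bxurepr}, Fubini's theorem (legitimate by (iv) and the local boundedness of $u$ and $\bsmall$) gives the distributional identity \eqref{f:coarea}, where each frozen-field pairing $(\bsmall_t, D\chi_{\{u>t\}})$ is, by the classical Anzellotti theory \cite{Anz} recalled in the preliminaries, a Radon measure with $|(\bsmall_t, D\chi_{\{u>t\}})|\leq \|\bsmall_t\|_{\infty}\,|D\chi_{\{u>t\}}|$ (here $\Div\bsmall_t\in L^1_{\rm loc}$ lets you replace $\chi^*_{\{u>t\}}$ by $\chi_{\{u>t\}}$ in the defining integrals). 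Integrating this bound in $t$ and using the coarea formula in $BV$ yields $|\langle(\bsmall(\cdot,u),Du),\varphi\rangle|\leq \|\bsmall\|_{L^\infty(K)}\int|\varphi|\,d|Du|$, whence the distribution is a Radon measure satisfying \eqref{f:mubdd} (first on open sets, then on Borel sets by regularity), and \eqref{lolo} holds by construction. Note that in this route the normal traces and Theorem~\ref{t:repr} play no role --- the elementary Anzellotti bound suffices --- so I would drop that reference, promote this argument to the main proof, and keep the regularization discussion only if you want the Sobolev consistency statement, in which case the citation of \cite{dcl} is mandatory.
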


From \eqref{f:mubdd} it follows that
\((\bsmall(\cdot,u), Du) \ll |Du| \),
hence there exists a function
\(\Theta(\bsmall, u; \cdot) \in L^1(\Omega, |Du|)\)
such that
\begin{equation}\label{f:Theta}
(\bsmall(\cdot,u), Du) = \Theta(\bsmall, u; \cdot)\, |Du|\,,
\qquad \text{$|Du|$--a.e.\ in $\Omega$.}
\end{equation}

\begin{remark}
\label{r:repr}
By the definition \eqref{f:bxu} of the pairing and the definition \eqref{f:B} of $\B$, it follows that,
for every $\varphi\in C_c^\infty(\Omega)$,
\begin{equation}
\label{f:bxurepr}
\begin{split}
 \langle(\bsmall(\cdot,u), Du),\varphi\rangle  = {} &
-\int_\Omega \varphi(x) \int_0^{u(x)} \Div_x \bsmall_t(x)\, dt\, dx
\\ & - \int_\Omega \int_0^{u(x)} \bsmall_t(x)\cdot \nabla\varphi(x)\, dt\, dx.
\end{split}
\end{equation}
\end{remark}

\section{Coarea formula for the pairing measure}
\label{ss:coarea}

In this section we establish a coarea formula for the pairing measure
$(\bsmall(\cdot, u), Du)$,
and we draw some consequences
that will be used in order to prove its integral representation (see Theorem \ref{t:repr2} below).

\begin{theorem}[Coarea formula for the pairing measure]
\label{p:coarea}
Let $\bsmall$
satisfy assumptions (i)-(ii$^\prime$)-(iii)-(iv),
and let $u\in  \BVLloc$. 
Then
\begin{gather}
 \langle(\bsmall(\cdot,u), Du),\varphi\rangle
= \int_{\R}  \langle(\bsmall_t, D\chi_{\{u>t\}}),\varphi\rangle\,dt,
\qquad \forall\varphi\in C_c^\infty(\Omega)\,,
\label{f:coarea}
\\
(\bsmall(\cdot,u), Du) (B)
= \int_{\R}  (\bsmall_t, D\chi_{\{u>t\}}) (B)\,dt,
\qquad \forall\ \text{Borel set}\ B\subset\Omega\,.
\label{f:coareaB}
\end{gather}
\end{theorem}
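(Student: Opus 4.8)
The plan is to establish the test–function identity \eqref{f:coarea} first, and then upgrade it to the measure identity \eqref{f:coareaB} by a standard ``equal on $C_c^\infty$ implies equal as measures'' argument. For the first step I would compute both sides explicitly. On the right, fix $t$ in the full-measure set of reals for which $\{u>t\}$ has finite perimeter (Theorem~\ref{coarea}). Since $\bsmall_t\in\LDMloc$ by assumption (iii), its divergence $\Div\bsmall_t=\Div_x\bsmall_t\,\LLN$ is absolutely continuous, so in the Anzellotti definition applied to $\A=\bsmall_t$ and $v=\chi_{\{u>t\}}$ the precise representative $(\chi_{\{u>t\}})^*$ may be replaced by $\chi_{\{u>t\}}$ up to an $\LLN$-null set, giving
\[
\langle(\bsmall_t, D\chi_{\{u>t\}}),\varphi\rangle
= -\int_{\{u>t\}}\varphi\,\Div_x\bsmall_t\,dx
- \int_{\{u>t\}}\bsmall_t\cdot\nabla\varphi\,dx .
\]
On the left, Remark~\ref{r:repr} gives $\langle(\bsmall(\cdot,u),Du),\varphi\rangle$ in the form \eqref{f:bxurepr}, i.e.\ the same structure but with $\int_{\{u>t\}}$ replaced by $\int_\Omega\int_0^{u(x)}dt$.

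The crux is a careful application of Fubini. A naive interchange of $\int_\R$ and $\int_\Omega$ would produce the inner integral $\int_{-\infty}^{u(x)}$ rather than $\int_0^{u(x)}$, and its integrand need not be absolutely integrable over $(-\infty,u(x))$, so Fubini cannot be applied term by term. To circumvent this I would use that, for each fixed $t$, the divergence theorem for $\bsmall_t\in\LDMloc$ tested against the compactly supported $\varphi$ yields $\int_\Omega\varphi\,\Div_x\bsmall_t\,dx+\int_\Omega\bsmall_t\cdot\nabla\varphi\,dx=0$. Subtracting $\chi_{\{t<0\}}$ times this identically vanishing quantity I rewrite
\[
\langle(\bsmall_t, D\chi_{\{u>t\}}),\varphi\rangle
= -\int_\Omega [\chi_{\{u>t\}}-\chi_{\{t<0\}}]\,\varphi\,\Div_x\bsmall_t\,dx
- \int_\Omega [\chi_{\{u>t\}}-\chi_{\{t<0\}}]\,\bsmall_t\cdot\nabla\varphi\,dx .
\]
For fixed $x$ the factor $\chi_{\{u(x)>t\}}-\chi_{\{t<0\}}$ vanishes outside the bounded interval between $0$ and $u(x)$ and equals $\pm1$ on it; hence, using assumption (iv) ($|\Div_x\bsmall_t|\le\sigma\in L^1_{\mathrm{loc}}$) and the local boundedness of $\bsmall$ on $\spt\varphi\times[-M,M]$ with $M:=\|u\|_{L^\infty(\spt\varphi)}$, the two $t$-integrands are dominated by $|u|\,\sigma\,|\varphi|$ and by $C\,|u|\,|\nabla\varphi|$, both in $L^1(\spt\varphi)$. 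This legitimizes the interchange; integrating over $t$ and using $\int_\R[\chi_{\{u(x)>t\}}-\chi_{\{t<0\}}]\,g_t\,dt=\int_0^{u(x)}g_t\,dt$ recovers exactly \eqref{f:bxurepr}, which proves \eqref{f:coarea}.

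For the measure identity \eqref{f:coareaB} I would show that $\nu(B):=\int_\R(\bsmall_t, D\chi_{\{u>t\}})(B)\,dt$ defines a signed Radon measure and invoke the test–function identity. For $B\Subset\Omega$ the family $(\bsmall_t, D\chi_{\{u>t\}})$ is supported in $t\in[-M_B,M_B]$ with $M_B:=\|u\|_{L^\infty(\overline B)}$ (for $|t|>M_B$ the function $\chi_{\{u>t\}}$ is $\LLN$-a.e.\ constant near $\overline B$, so $|D\chi_{\{u>t\}}|(B)=0$). Combining the bound \eqref{f:mubdd} from Theorem~\ref{t:chainb4} with the $BV$ coarea formula (Theorem~\ref{coarea} applied to $g=\chi_B$, which gives $\int_\R|D\chi_{\{u>t\}}|(B)\,dt=|Du|(B)$) yields $\int_\R|(\bsmall_t, D\chi_{\{u>t\}})|(B)\,dt\le C_B\,|Du|(B)<+\infty$. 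With the measurability of $t\mapsto(\bsmall_t, D\chi_{\{u>t\}})(B)$ this makes $\nu$ a well-defined signed measure, absolutely continuous with respect to $|Du|$. By \eqref{f:coarea}, $\nu$ and $(\bsmall(\cdot,u),Du)$ act identically on $C_c^\infty(\Omega)$, hence coincide as measures, giving \eqref{f:coareaB}.

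I expect the main obstacle to be precisely the Fubini justification: the honest inner integral runs over $(-\infty,u(x))$ and is not absolutely convergent term by term, and the decisive observation is that the ``reference'' contribution is annihilated by the divergence theorem, so only the range of $t$ between $0$ and $u(x)$ survives and everything localizes to a bounded $t$-interval. A secondary technical point is the measurability of the measure-valued map $t\mapsto(\bsmall_t, D\chi_{\{u>t\}})$ needed to turn $\nu$ into a genuine measure, which I would handle through the test–function characterization and a monotone class argument.
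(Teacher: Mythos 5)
Your proposal is correct and follows essentially the same route as the paper's proof: both rest on the representation \eqref{f:bxurepr}, a Fubini interchange dominated via assumptions (i) and (iv), the identification of the inner $x$-integral with $\langle(\bsmall_t,D\chi_{\{u>t\}}),\varphi\rangle$ (legitimate because $\Div\bsmall_t\in L^1_{\rm loc}$ allows replacing $\chi^*_{\{u>t\}}$ by $\chi_{\{u>t\}}$), and finally the observation that the two sides, being Radon measures that agree on $C_c^\infty(\Omega)$, coincide as measures. The only difference is that you carry out explicitly what the paper leaves as ``minor modifications'': the paper reduces to $u\geq 0$, while you treat signed $u$ directly via the $\chi_{\{u>t\}}-\chi_{\{t<0\}}$ cancellation, and you also verify in detail (uniform local bounds, measurability in $t$) that the right-hand side of \eqref{f:coareaB} is a genuine signed Radon measure.
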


\begin{proof}
Assume, for simplicity, that $u\geq 0$ and let $C> \|u\|_\infty$.
Using the representation \eqref{f:bxurepr}, we have that
\begin{equation*}
\begin{split}
 \langle(\bsmall(\cdot,u), Du),\varphi\rangle  = {} &
-\int_0^C \int_\Omega \chi_{\{u > t\}} \varphi \Div_x \bsmall_t \, dx\, dt
-\int_0^C \int_\Omega  \chi_{\{u > t\}} \bsmall_t\cdot\nabla\varphi\, dx\, dt
\\ = {} &
\int_0^C \langle(\bsmall_t, D\chi_{\{u>t\}}),\varphi\rangle\,dt,
\end{split}
\end{equation*}
where, in the last equality, we have used the fact that,
for $\LLU$-a.e.\ $t\in\R$,
\[
\Div (\chi_{\{u>t\}} \bsmall_t) = 
\chi_{\{u>t\}}^* \, \Div \bsmall_t + (\bsmall_t, D \chi_{\{u>t\}}).
\]
The general case follows
as in \cite[Theorem~4.2]{CD3} with minor modifications.

Finally, since both sides of \eqref{f:coarea} are real measures in $\Omega$,
they coincide not only as distributions, but also as measures,
hence \eqref{f:coareaB} follows.
\end{proof}

The following approximation result is in the spirit of
\cite[Proposition~4.11]{CDM}, \cite[Proposition~4.15]{CD3},
\cite[Theorem~1.2]{ChenFrid}, \cite[Lemma~2.2]{Anz}.
\begin{theorem}[Approximation by $C^\infty$ fields]
\label{t:convolu}
Let $\bsmall$
satisfy assumptions (i)-(ii$^\prime$)-(iii)-(iv).
Then there exists a sequence of vector fields $\bsmall^k\colon\Omega\times\R\to\R^N$
satisfying the same assumptions, such that
$\bsmall^k_t \in C^\infty(\Omega, \R^N)$ for every $t\in\R$ and 
\[
(\bsmall^k(\cdot, u), Du) \stackrel{*}{\rightharpoonup}
(\bsmall(\cdot, u), Du),
\qquad \forall u\in\BVLloc,
\]
locally in the weak${}^*$ sense of measures in $\Omega$.
If, in addition, $\bsmall$ satisfies (ii),
then also the vector fields $\bsmall^k$ satisfy (ii).
\end{theorem}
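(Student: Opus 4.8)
The plan is to produce $\bsmall^k$ by mollifying $\bsmall$ in the spatial variable only. Fix a standard family of mollifiers $(\eta_k)$, $\eta_k(x)=k^N\eta(kx)$ with $\eta\in C_c^\infty(B_1(0))$, $\eta\ge0$, $\int\eta=1$, and set $\bsmall^k(x,t):=(\eta_k*\bsmall_t)(x)$, the convolution being in $x$ and $\bsmall_t$ extended by $0$ outside $\Omega$. Then $\bsmall^k_t\in C^\infty(\Omega,\R^N)$ for every $t$, so the smoothness requirement is automatic. That $\bsmall^k$ inherits (i), (ii$'$) and, when it holds, (ii), is routine once one notes that convolution in $x$ commutes with all operations in $t$: local boundedness is immediate; from $|\bsmall^k(x,t)-\bsmall^k(x,s)|\le\int\eta_k(y)|\bsmall(x-y,t)-\bsmall(x-y,s)|\,dy$ one reads off the same Lipschitz constant $L$ under (ii) and, by dominated convergence, continuity in $t$ under (ii$'$). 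For (iii)--(iv) one uses that, on any $K\Subset\Omega$ and for $k$ large, $\Div_x\bsmall^k_t=\eta_k*\Div_x\bsmall_t$, whence $|\Div_x\bsmall^k_t|\le\eta_k*\sigma$ and the corresponding $\sigma^k\le\eta_k*\sigma\in L^1_{\rm loc}$.

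I would then read the pairing off the defining formula~\eqref{f:bxu}. Putting $\B^k(x,t):=\int_0^t\bsmall^k(x,s)\,ds$, Fubini gives $\B^k=\eta_k*\B$ and, on compactly contained sets for $k$ large, $\Div_x\B^k=\eta_k*\Div_x\B$, with $\B$ as in~\eqref{f:B}. By Theorem~\ref{t:chainb4}, for every $\varphi\in C_c^\infty(\Omega)$,
\[
\langle(\bsmall^k(\cdot,u),Du),\varphi\rangle=-\int_\Omega\varphi(x)\,(\Div_x\B^k)(x,u(x))\,dx-\int_\Omega\B^k(x,u(x))\cdot\nabla\varphi(x)\,dx,
\]
and similarly for $\bsmall$, so it suffices to pass to the limit in the two integrals. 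The bound~\eqref{f:mubdd} holds uniformly in $k$, since $\|\bsmall^k\|_{L^\infty(K,\R^N)}\le\|\bsmall\|_{L^\infty(K',\R^N)}$ on a slightly larger $K'$; this equiboundedness of the total variations on compact sets lets me test only against $C_c^\infty(\Omega)$ and still conclude weak$^\star$ convergence against all of $C_c(\Omega)$.

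The heart of the matter is to show $\B^k(x,u(x))\to\B(x,u(x))$ and $(\Div_x\B^k)(x,u(x))\to(\Div_x\B)(x,u(x))$ in $L^1(K)$, with $K:=\spt\varphi$ and $M:=\|u\|_{L^\infty(K)}$. For each fixed $t$ the convergences $\eta_k*\B(\cdot,t)\to\B(\cdot,t)$ and $\eta_k*\Div_x\B(\cdot,t)\to\Div_x\B(\cdot,t)$ in $L^1(K)$ are standard, so the real issue is that the second argument $u(x)$ varies with $x$ while $\Div_x\B(\cdot,t)$ is merely $L^1$. I would bridge this by exploiting the Lipschitz dependence on $t$: one has $|\B(x,t)-\B(x,s)|\le\|\bsmall\|_{L^\infty}|t-s|$ and, using $(\Div_x\B)(x,t)=\int_0^t\Div_x\bsmall_\tau(x)\,d\tau$ from Remark~\ref{r:repr}, $|(\Div_x\B)(x,t)-(\Div_x\B)(x,s)|\le\sigma(x)|t-s|$, with the same bounds for the mollified fields (and $\eta_k*\sigma$ in place of $\sigma$). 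Taking a $\delta$-net $t_1,\dots,t_m$ of $[-M,M]$ and replacing $u(x)$ by the nearest node reduces the composed convergence to the finitely many fixed-$t$ limits plus an error bounded by $\delta\int_{K'}(\sigma+\eta_k*\sigma)\le2\delta\int_{K'}\sigma$; letting $k\to\infty$ and then $\delta\to0$ concludes.

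I expect the main obstacle to be precisely this interaction between the $x$-dependent level $u(x)$ and the mere $L^1_{\rm loc}$ integrability of the divergence, which forbids a direct use of the fixed-$t$ mollification estimates; the uniform domination of the $t$-Lipschitz constant by the single function $\sigma$ from assumption (iv) is what makes the discretization work. A secondary technical point is the behaviour of $\Div_x\bsmall^k_t$ within distance $1/k$ of $\partial\Omega$, where the zero extension introduces spurious contributions; since only local convergence is claimed this may be handled either by an exhaustion $\Omega_j\Subset\Omega$ or, to keep a single sequence globally satisfying the assumptions, by mollifiers of variable radius vanishing at $\partial\Omega$.
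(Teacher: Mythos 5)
Your proposal is correct, and your construction of the approximating fields (mollification in $x$ only, with the boundary handled by an exhaustion or variable-radius mollifiers) is essentially the same as the one the paper imports from \cite{CDM}*{Proposition 4.11}. Where you genuinely diverge is the limit passage. The paper slices horizontally in $t$: using the layer-cake formula \eqref{f:bxurepr} and Fubini, it writes both pairings as integrals over $t$ of $\langle(\bsmall^k_t,D\chi_{\{u>t\}}),\varphi\rangle$, invokes the fixed-$t$ convergence \eqref{f:ivp0} with $v=\chi_{\{u>t\}}$ (formula (4.8) of \cite{CDM}, which simplifies here since $\Div\bsmall_t\in L^1_{\rm loc}$), and concludes by dominated convergence in $t$, with dominating functions built from $\sigma$ and the local uniform bound on $|\bsmall^k|$. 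You instead work directly with the defining formula \eqref{f:bxu} and prove $L^1(K)$ convergence of the composed functions $\B^k(\cdot,u(\cdot))$ and $(\Div_x\B^k)(\cdot,u(\cdot))$, taming the $x$-dependence of the level $u(x)$ by a $\delta$-net in $[-M,M]$ together with the Lipschitz-in-$t$ estimates $|\Div_x\B(x,t)-\Div_x\B(x,s)|\leq\sigma(x)|t-s|$ and the analogous bound for $\B$; the error terms are then controlled by $\delta\int\sigma$ uniformly in $k$. Both arguments pivot on assumption (iv), but yours is more self-contained: it needs no convergence lemma for pairings of fixed fields with $BV$ functions, and indeed the limit passage uses only that $u$ is bounded and measurable, not that it is $BV$. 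The paper's slicing, on the other hand, fits naturally with the coarea machinery (Theorem~\ref{p:coarea}) developed in the same section and reuses an existing lemma. One further merit of your write-up: you state explicitly that the uniform mass bound \eqref{f:mubdd} is what upgrades convergence tested against $C_c^\infty(\Omega)$ to local weak$^*$ convergence of measures; the paper leaves this point implicit.
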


\begin{proof}
Using the same construction described in the proof of
\cite[Proposition~4.11]{CDM},
we obtain locally uniformly bounded vector fields 
$\bsmall^k_t \in C^\infty(\Omega, \R^N)$ satisfying
(i)-(ii$^\prime$)-(iii)-(iv),  and, for every $t\in\R$,
$\bsmall^k_t \to \bsmall_t$ in $L^1_{\rm loc}(\Omega)$.
If, in addition, $\bsmall$ satisfies (ii),
then it is verified that also the vector fields $\bsmall^k$ satisfy (ii).

Moreover, for every $t\in\R$ and $v\in\BVLloc$,
\[
\lim_{k\to +\infty}\int_{\Omega} v \, \varphi \, \Div\bsmall^k_t\, dx
= \int_{\Omega} v^* \,\varphi \, d \Div\bsmall_t,
\qquad \forall \varphi\in C_c(\Omega)
\]
(see \cite{CDM}, formula (4.8)).
We underline that, since by assumption $\Div\bsmall_t\in L^1_{\rm loc}(\Omega)$,
then the above relation can be written as
\begin{equation}
\label{f:ivp0}
\lim_{k\to +\infty}\int_{\Omega} v \, \varphi \, \Div\bsmall^k_t\, dx
= \int_{\Omega} v \,\varphi \, \Div\bsmall_t\, dx,
\qquad \forall \varphi\in C_c(\Omega)\,.
\end{equation}
Let us fix $u\in \BVLloc$ and $\varphi\in C_c(\Omega)$.
To simplify the notation, we assume without loss of generality
that $u\geq 0$.
By the representation formula \eqref{f:bxurepr} and Fubini's Theorem, we have that
\begin{equation*}
\begin{split}
 \langle(\bsmall^k(\cdot,u), Du),\varphi\rangle  = {} &
-\int_\Omega \varphi(x) \int_0^{u(x)} \Div_x \bsmall^k_t(x)\, dt\, dx
\\ & - \int_\Omega \int_0^{u(x)} \bsmall^k_t(x)\cdot \nabla\varphi(x)\, dt\, dx
\\ = {} &
- \int_0^{\infty} \int_\Omega \chi_{\{u > t\}}(x)\, \varphi(x)\,\Div_x \bsmall^k_t(x)\,dx\, dt
\\ & 
- \int_0^{\infty} \int_\Omega \chi_{\{u > t\}}(x)\, \bsmall^k_t(x)\cdot
\nabla\varphi(x)\, dx \, dt
\\ =: {} & -I^k_1 - I^k_2.
\end{split}
\end{equation*}
For every $t\in\R$, by \eqref{f:ivp0} with $v = \chi_{\{u > t\}}$ we 
deduce that, as $k\to\infty$,
\[
\zeta^k(t) := 
\int_\Omega \chi_{\{u > t\}}(x)\, \varphi(x)\,\Div_x \bsmall^k_t(x)\,dx
\to
\int_\Omega \chi_{\{u > t\}}(x)\, \varphi(x)\,\Div_x \bsmall_t(x)\,dx
\,.
\]
Let $K\Subset \Omega$ denote the support of $\varphi$
and let $a := \|u\|_{L^\infty(K)}$.
Let $\varepsilon > 0$ be such that $K_\varepsilon := K + B_\varepsilon \Subset \Omega$.
By the explicit construction of $\bsmall^k_t$, for $k$ large enough
it holds that
\[
\int_K |\Div_x\bsmall^k_t(x)|\, dx \leq
\int_{K_\varepsilon} |\Div_x\bsmall_t(x)|\, dx + 1
\leq
\int_{K_\varepsilon} \sigma\, dx + 1
\]
(see \cite[formula (4.7)]{CDM}),
where the function $\sigma$ is defined in~\eqref{f:sigma}.
Hence, it holds that
\[
|\zeta^k(t)| \leq \chi_{[0,a]}(t) \, \|\varphi\|_\infty \left(\int_{K_\varepsilon} \sigma\, dx + 1\right)\,.
\]
By the Dominated Convergence Theorem we deduce that
\begin{equation}
\label{f:Ik1}
\lim_{k\to \infty} I^k_1 =
\int_0^{\infty} \int_\Omega \chi_{\{u > t\}}(x)\, \varphi(x)\,\Div_x \bsmall_t(x)\,dx\, dt
= 
\int_\Omega \varphi(x) \int_0^{u(x)} \Div_x \bsmall_t(x)\, dt\, dx\,.
\end{equation}

Let us compute the limit of $I^k_2$.
Since, for every $t\in\R$, $\bsmall^k_t\to \bsmall_t$ in $L^1_{\rm loc}(\Omega)$,
it holds that
\[
\psi^k(t) := \int_\Omega \chi_{\{u > t\}}(x)\, \bsmall^k_t(x)\cdot
\nabla\varphi(x)\, dx \to
\int_\Omega \chi_{\{u > t\}}(x)\, \bsmall_t(x)\cdot
\nabla\varphi(x)\, dx\,.
\]
Moreover, 
there exists a constant $M> 0$ such that
$\|\bsmall^k\|_{L^\infty(K\times [-a, a])}\leq M$ for every $k\in\N$,
so that
\[
\psi^k(t)| \leq M\, \|\nabla\varphi\|_{1}\,,
\]
and hence, by the Dominated Convergence Theorem,
\begin{equation}
\label{f:Ik2}
\lim_{k\to +\infty} I^k_2
=
 \int_0^{\infty} \int_\Omega \chi_{\{u > t\}}(x)\, \bsmall_s(x)\cdot
\nabla\varphi(x)\, dx \, dt
=
\int_\Omega \int_0^{u(x)} \bsmall_t(x)\cdot \nabla\varphi(x)\, dt\, dx
\,.
\end{equation}
The conclusion now follows from \eqref{f:Ik1} and \eqref{f:Ik2}.
\end{proof}

\begin{proposition}\label{l:theta}
Let $\bsmall$
satisfy assumptions (i)-(ii$^\prime$)-(iii)-(iv),
and let $u\in \BVLloc[\Omega]$.
Then
\begin{equation*}
\text{for $\mathcal{L}^1$-a.e.}\ t\in\R:
\quad
\Theta(\bsmall, u; x) =
\Theta(\bsmall_t, \chiut; x)
\quad
\text{for \(|D\chiut|\)-a.e.}\ x\in\Omega\,,
\end{equation*}
where $\Theta$ is defined in \eqref{f:Theta}.
\end{proposition}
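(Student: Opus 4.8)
The plan is to regard the coarea formula for the pairing (Theorem~\ref{p:coarea}) and the classical coarea formula (Theorem~\ref{coarea}) as two disintegrations over the level parameter $t$ of the \emph{same} measure, and then to identify their densities fibre by fibre. Since $|D\chiut|=\hh\res\partial^*\{u>t\}$ and, by Corollary~\ref{c:paironch} together with $\bsmall_t\in\LDMloc$, the slice pairing satisfies $(\bsmall_t,D\chiut)=\Theta(\bsmall_t,\chiut;\cdot)\,|D\chiut|$ with $\Theta(\bsmall_t,\chiut;x)=\Trace[]{\bsmall_t}{\partial^*\{u>t\}}(x)$, formula~\eqref{f:coareaB} and Theorem~\ref{coarea} (applied to $g=\chi_B\,\Theta(\bsmall,u;\cdot)$) give, for every Borel $B\Subset\Omega$,
\begin{equation*}
\int_\R\!\!\int_{B\cap\partial^*\{u>t\}}\!\!\Theta(\bsmall_t,\chiut;x)\,d\hh\,dt
=(\bsmall(\cdot,u),Du)(B)
=\int_\R\!\!\int_{B\cap\partial^*\{u>t\}}\!\!\Theta(\bsmall,u;x)\,d\hh\,dt.
\end{equation*}

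I would encode this on the product $\Omega\times\R$. Let $\Lambda$ be the measure disintegrated as $d\Lambda=(\hh\res\partial^*\{u>t\})\,dt$; the classical coarea formula gives $\Lambda(K\times\R)=|Du|(K)<\infty$ for $K\Subset\Omega$, and the signed measure $P$ with $dP=\Theta(\bsmall_t,\chiut;x)\,d\Lambda$ projects, under $\pi(x,t)=x$, to $\pi_\#\Lambda=|Du|$ and $\pi_\#P=(\bsmall(\cdot,u),Du)=\Theta(\bsmall,u;\cdot)\,|Du|$. The statement to be proved is exactly that the two densities coincide $\Lambda$-a.e., namely $\Theta(\bsmall_t,\chiut;x)=\Theta(\bsmall,u;x)$. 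The display above only asserts equality of the $\pi$-marginals, which is strictly weaker; to reach a $\Lambda$-a.e.\ identity I must test against the cylinders $B\times(a,b)$, which form a $\pi$-system generating the product $\sigma$-algebra.

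The localization in $t$ is produced by truncations $v_{a,b}:=(u\vee a)\wedge b$. Because $\{v_{a,b}>t\}=\{u>t\}$ for $t\in(a,b)$, while $D\chi_{\{v_{a,b}>t\}}=0$ for $t\notin[a,b]$, Theorem~\ref{p:coarea} and the classical coarea formula applied to $v_{a,b}$ yield
\begin{equation*}
\pi_\#\!\big(P\res(\Omega\times(a,b))\big)=(\bsmall(\cdot,v_{a,b}),Dv_{a,b}),
\qquad
\pi_\#\!\big(\Lambda\res(\Omega\times(a,b))\big)=|Dv_{a,b}|.
\end{equation*}
Hence the cylinder identity $\int_{B\times(a,b)}\Theta(\bsmall_t,\chiut;\cdot)\,d\Lambda=\int_{B\times(a,b)}\Theta(\bsmall,u;\cdot)\,d\Lambda$, for all $B$ and all rational $a<b$, is equivalent to the single \emph{locality} statement $\Theta(\bsmall,v_{a,b};\cdot)=\Theta(\bsmall,u;\cdot)$, $|Dv_{a,b}|$-a.e. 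Granting this for all rational $a<b$, the $\pi$-system argument closes the proof.

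The main obstacle is precisely this truncation-invariance of the pairing density, and I expect it to split according to the structure of $Du$. On the approximate continuity set $\Omega\setminus S_u$ it is clean: by Theorem~\ref{coarea}(a$^\prime$)--(b$^\prime$), $\partial^*\{u>t\}\cap(\Omega\setminus S_u)$ agrees $\hh$-a.e.\ with $\{\ut=t\}$, so only the single level $t=\ut(x)$ is active along each fibre, the density is automatically a function of $x$ alone, and on $\{a<\ut<b\}$ one has $v_{a,b}=u$ together with $\nabla v_{a,b}=\nabla u$ and $D^cv_{a,b}=D^cu\res\{a<\ut<b\}$, which forces the two densities to coincide there. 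The genuinely delicate part is the jump set $J_u$: a whole interval of levels $t\in(u^-(x),u^+(x))$ passes through each $x\in J_u$, so here one must compute both densities explicitly from the chain rule~\eqref{lolo} and the representation~\eqref{f:bxurepr} (via Corollary~\ref{c:paironch}), and carefully track the interplay between the truncation thresholds $a,b$ and the jump interval. This is the step requiring the most care, since on $J_u$ the density $\Theta(\bsmall,u;\cdot)$ arises as an average over the jump interval and must be reconciled with the individual slice densities.
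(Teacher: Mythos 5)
Your strategy is genuinely different from the paper's: the paper disposes of this proposition in one line, by invoking the proof of \cite[Proposition~5.2]{CDM}, i.e.\ the coarea formula (Theorem~\ref{p:coarea}) combined with the approximation by smooth fields (Theorem~\ref{t:convolu}), whereas you build a disintegration on $\Omega\times\R$ and try to upgrade equality of $x$-marginals to a fibrewise identity via truncations $v_{a,b}=(u\vee a)\wedge b$ and a $\pi$-system argument. Your observation that the two coarea formulas only identify marginals, so that some localization in $t$ is indispensable, is exactly the right concern, and the reduction ``truncation-invariance $\Rightarrow$ cylinder identities $\Rightarrow$ conclusion'' is logically sound. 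The genuine gap is that the whole proof is thereby hung on the claim $\Theta(\bsmall,v_{a,b};\cdot)=\Theta(\bsmall,u;\cdot)$ $|Dv_{a,b}|$-a.e., and on $J_u$ you give no argument for it at all, only the announcement that it is ``the step requiring the most care''. Since $|D\chiut|$ charges $J_u\cap\partial^*\{u>t\}$, that step is not a deferrable detail: on the jump set it \emph{is} the proposition.

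Moreover, that missing step would fail, and so does the statement you were asked to prove, whenever $\bsmall$ genuinely depends on $t$. Take $\Omega=B_2(0)$, $E=B_1(0)$, $u=\chi_E$ and $\bsmall(x,s):=s\,e_1$, which satisfies (i)--(iv) with $\sigma\equiv 0$. Then $\B(x,s)=\tfrac{s^2}{2}e_1$ and $\chi_E^2=\chi_E$, so the definition \eqref{f:bxu} gives
\begin{equation*}
(\bsmall(\cdot,u),Du)=\tfrac12\,e_1\cdot D\chi_E,
\qquad
(\bsmall_t,D\chiut)=t\,e_1\cdot D\chi_E
\quad\text{for every } t\in(0,1),
\end{equation*}
and since $|Du|=|D\chiut|=\hh\res\partial^*E$ for all $t\in(0,1)$, the asserted equality of densities would force $\tfrac12\,e_1\cdot D\chi_E=t\,e_1\cdot D\chi_E$ as measures, which is false for every $t\neq\tfrac12$. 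The same computation with $v_{a,b}=a+(b-a)\chi_E$ yields $(\bsmall(\cdot,v_{a,b}),Dv_{a,b})=\tfrac{(a+b)(b-a)}{2}\,e_1\cdot D\chi_E$ and $|Dv_{a,b}|=(b-a)\,\hh\res\partial^*E$, so $\Theta(\bsmall,v_{a,b};\cdot)=\tfrac{a+b}{2}\,e_1\cdot\nuint_E\neq\tfrac12\,e_1\cdot\nuint_E=\Theta(\bsmall,u;\cdot)$ whenever $a+b\neq 1$: your truncation-invariance fails on $J_u$ as well. This is consistent with the paper's own Theorem~\ref{t:repr3}: on $J_u$ the density $\Theta(\bsmall,u;\cdot)$ is the \emph{average} $\mean{u^-}^{u^+}\Trace[]{\bsmall_t}{\partial^*\{u>t\}}\,dt$ over the jump interval, while $\Theta(\bsmall_t,\chiut;\cdot)=\Trace[]{\bsmall_t}{\partial^*\{u>t\}}$ is the value at the single level $t$, and an average cannot coincide with its integrand for a.e.\ $t$ unless the integrand is essentially constant in $t$. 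The obstruction is invisible in the paper's reference, because in \cite{CDM} the field has no $t$-dependence and the normal trace on $J_u$ is the same for every level in the jump interval; so neither the paper's proof-by-citation nor your truncation step survives the $t$-dependence, and the only salvageable conclusion is the identity for $|D\chiut|$-a.e.\ $x\in\Omega\setminus J_u$ --- which is precisely what your (correct) diffuse-part argument delivers, and which also forces a corresponding restriction in the downstream Theorem~\ref{p:coareavar}.
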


\begin{proof}
The proof is essentially the same of Proposition~5.2 in
\cite{CDM}, and it is based on the use
of the coarea formula (Theorem~\ref{p:coarea}) and the approximation result by smooth fields (Theorem~\ref{t:convolu}).
\end{proof}

\begin{theorem}[Coarea formula for the variation]
\label{p:coareavar}
Let $\bsmall$
satisfy assumptions (i)-(ii$^\prime$)-(iii)-(iv),
and let $u\in \BVLloc[\Omega]$.
Then
\begin{equation*}
 \langle\left|(\bsmall(\cdot,u), Du)\right|\,,\,\varphi\rangle
= \int_{\R}  \langle\left|(\bsmall_t, D\chi_{\{u>t\}})\right|\,,\,\varphi\rangle\,dt,
\qquad \forall\varphi\in C_c^\infty(\Omega).
\end{equation*}
\end{theorem}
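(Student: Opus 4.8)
The plan is to reduce both sides to the scalar densities introduced in \eqref{f:Theta} and then to invoke the pointwise comparison of these densities across the level sets of $u$ provided by Proposition~\ref{l:theta}. Since $(\bsmall(\cdot,u),Du)=\Theta(\bsmall,u;\cdot)\,|Du|$ with $\Theta(\bsmall,u;\cdot)\in L^1(\Omega,|Du|)$, and $|Du|$ is a positive measure, the total variation is simply
\[
|(\bsmall(\cdot,u),Du)| = |\Theta(\bsmall,u;\cdot)|\,|Du|,
\]
and analogously $|(\bsmall_t,D\chi_{\{u>t\}})| = |\Theta(\bsmall_t,\chi_{\{u>t\}};\cdot)|\,|D\chi_{\{u>t\}}|$ for $\LLU$--a.e.\ $t$. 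Because both members of the claimed identity are (integrals of) \emph{positive} measures, it suffices to test against nonnegative $\varphi\in C_c^\infty(\Omega)$ and then to extend to arbitrary test functions by linearity, equality of two positive Radon measures being determined by their action on nonnegative functions.

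Fix such a $\varphi\ge 0$ and set $g(x):=\varphi(x)\,|\Theta(\bsmall,u;x)|$, a nonnegative Borel function on $\Omega$. First I would apply the coarea formula of Theorem~\ref{coarea} to $g$, together with De Giorgi's identity $|D\chi_{\{u>t\}}|=\hh\res\partial^*\{u>t\}$, to obtain
\[
\langle|(\bsmall(\cdot,u),Du)|,\varphi\rangle
= \int_\Omega g\,d|Du|
= \int_{\R} \Big( \int_{\partial^*\{u>t\}} \varphi\,|\Theta(\bsmall,u;x)|\,d\hh \Big)\,dt .
\]
Next, for $\LLU$--a.e.\ $t$ I would replace $|\Theta(\bsmall,u;x)|$ by $|\Theta(\bsmall_t,\chi_{\{u>t\}};x)|$ on $\partial^*\{u>t\}$: this is legitimate precisely because Proposition~\ref{l:theta} yields $\Theta(\bsmall,u;x)=\Theta(\bsmall_t,\chi_{\{u>t\}};x)$ for $|D\chi_{\{u>t\}}|$--a.e.\ $x$, and the equality of the two densities passes trivially to their absolute values. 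Undoing the De Giorgi identification then rewrites the inner integral as $\langle|(\bsmall_t,D\chi_{\{u>t\}})|,\varphi\rangle$, which gives the assertion after integration in $t$.

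The only genuinely delicate points, rather than true obstacles, are bookkeeping ones. One must check that $t\mapsto \langle|(\bsmall_t,D\chi_{\{u>t\}})|,\varphi\rangle$ is $\LLU$--measurable and integrable, so that the right-hand side is well defined; this is automatic from the display above, which exhibits it (for a.e.\ $t$) as the measurable, integrable $t$--section $\int_{\partial^*\{u>t\}}g\,d\hh$. One must also ensure that $\Theta(\bsmall,u;\cdot)$, which is defined only $|Du|$--a.e., is meaningful on $\hh$--a.e.\ level set $\partial^*\{u>t\}$; this follows from the disintegration $|Du|=\int_\R \hh\res\partial^*\{u>t\}\,dt$ implicit in Theorem~\ref{coarea}. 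The substantive content of the statement is entirely carried by Proposition~\ref{l:theta}: once that comparison of densities is available, the coarea formula for the variation is a direct consequence and requires no new estimate.
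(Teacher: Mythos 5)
Your proof is correct and takes essentially the same route as the paper: both arguments factor the two pairings through their densities $\Theta$ with respect to $|Du|$ and $|D\chi_{\{u>t\}}|$, apply the $BV$ coarea formula, and invoke Proposition~\ref{l:theta} to exchange $|\Theta(\bsmall,u;\cdot)|$ with $|\Theta(\bsmall_t,\chi_{\{u>t\}};\cdot)|$ on the level sets. The only cosmetic difference is that the paper establishes the identity for Borel sets $B\subset\Omega$ directly, whereas you test against nonnegative $\varphi\in C_c^\infty(\Omega)$ and extend by linearity; the substance is identical.
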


\begin{proof}
To simplify the notation let $\mu := (\bsmall(\cdot,u), Du)$ and
$\mu_t := (\bsmall_t, D\chi_{\{u>t\}})$, $t\in\R$.
By \eqref{f:Theta}, we have that
$\mu = \Theta(\bsmall, u) \, |Du|$, 
with $\Theta(\bsmall, u) \equiv \Theta(\bsmall, u; \cdot)$,
so that 
\[
|\mu| = |\Theta(\bsmall, u)| \, |Du|\,,
\qquad 
|\mu_t| = |\Theta(\bsmall_t, \chiut)|\,  |D\chiut|
\]
(see \cite[Proposition~1.23]{AFP}).
Let $B\subset\Omega$ be a Borel set.
By the coarea formula in BV (see \cite[Theorem~3.40]{AFP}) and
Proposition~\ref{l:theta}
it holds that
\[
\begin{split}
|\mu|(B) & = \int_B |\Theta(\bsmall, u)| \, d|Du|
= \int_{\R} dt \int_B |\Theta(\bsmall, u)|\, d |D\chiut|
\\ &
= \int_{\R} dt \int_B |\Theta(\bsmall_t, \chiut)|\, d |D\chiut|
= \int_{\R} |\mu_t|(B)\, dt\,,
\end{split}
\]
concluding the proof.
\end{proof}

\begin{lemma}
\label{l:lip}
Let $\bsmall$ satisfy (i)--(iv), and let
$u\in  \BVLloc$.
Then, for every $\tau\in\R$
and every $\varphi\in C_c^\infty(\Omega)$, it holds that
\begin{equation*}
\begin{split}
& \left|\langle(\bsmall(\cdot,u), Du),\varphi\rangle
- \langle(\bsmall_\tau, Du),\varphi\rangle\right|
\\ & \leq
L \, \|\varphi\|_\infty \left[
\int_{\spt\varphi} |\ut-\tau|\, d |D^d u|
+ \int_{J_u\cap\spt\varphi} \left(\int_{u^-}^{u^+} |t-\tau|\, dt\right) d \Haus{N-1}
\right],
\end{split}
\end{equation*}
where $\spt\varphi \Subset \Omega$ denotes the support of $\varphi$
and $L$ is the Lipschitz constant of assumption~(ii).
\end{lemma}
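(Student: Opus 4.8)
The plan is to use the coarea formula for the pairing measure (Theorem~\ref{p:coarea}) to split both pairings over the superlevel sets of $u$, and then to recombine the $t$-integral by means of the coarea formula in $BV$ (Theorem~\ref{coarea}).

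First I would apply \eqref{f:coarea} to the field $\bsmall$ and, separately, to the field which is constant in $t$ and equal to $\bsmall_\tau$ (this field satisfies (i)-(ii$^\prime$)-(iii)-(iv), with $\bsmall_t\equiv\bsmall_\tau$, so that its nonlinear pairing reduces to the linear one $(\bsmall_\tau, Du)$). Since the Anzellotti pairing is linear in the field, subtracting the two identities gives
\[
\langle(\bsmall(\cdot,u), Du),\varphi\rangle - \langle(\bsmall_\tau, Du),\varphi\rangle = \int_{\R} \langle(\bsmall_t - \bsmall_\tau, D\chi_{\{u>t\}}),\varphi\rangle\, dt.
\]

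Next, for $\LLU$-a.e.\ $t$ the set $\{u>t\}$ has finite perimeter, hence $|D\chi_{\{u>t\}}| = \hh\res\partial^*\{u>t\}$; combining the classical bound for the Anzellotti pairing $|(\A,Dv)|\leq \|\A\|_{L^\infty}|Dv|$ (see \cite{Anz,ChenFrid}) with the Lipschitz assumption (ii), which yields $\|\bsmall_t-\bsmall_\tau\|_{L^\infty(\spt\varphi)}\leq L\,|t-\tau|$, I would estimate
\[
\bigl|\langle(\bsmall_t - \bsmall_\tau, D\chi_{\{u>t\}}),\varphi\rangle\bigr| \leq L\,|t-\tau|\,\|\varphi\|_\infty\,\hh\bigl(\partial^*\{u>t\}\cap\spt\varphi\bigr).
\]
Integrating in $t$ (the integral is finite since $|Du|(\spt\varphi)<+\infty$) then reduces the whole statement to the identity
\[
\int_{\R} |t-\tau|\,\hh\bigl(\partial^*\{u>t\}\cap\spt\varphi\bigr)\, dt = \int_{\spt\varphi}|\ut-\tau|\, d|D^d u| + \int_{J_u\cap\spt\varphi}\Bigl(\int_{u^-}^{u^+}|t-\tau|\, dt\Bigr) d\hh.
\]

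Finally I would prove this last identity by the coarea formula in $BV$, splitting $\partial^*\{u>t\}$ into its intersections with the continuity set $\Omega\setminus S_u$ and with $J_u$ (recall $\hh(S_u\setminus J_u)=0$). On the continuity part, property (a$^\prime$) of Theorem~\ref{coarea} lets me replace $t$ by $\ut(x)$ in the integrand, turning it into the $t$-free weight $|\ut(x)-\tau|$, which then integrates against the diffuse part $|D^d u|$; on $J_u$ one has $x\in\partial^*\{u>t\}$ for $\hh$-a.e.\ $x$ precisely when $u^-(x)<t<u^+(x)$, so Fubini's theorem produces the inner integral $\int_{u^-}^{u^+}|t-\tau|\,dt$ weighted by $\hh\res J_u$. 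The main obstacle I expect is exactly this last coarea bookkeeping with a genuinely $t$-dependent integrand: Theorem~\ref{coarea} is stated for a fixed $g$, so one must carefully invoke (a$^\prime$)--(b$^\prime$) to pass to $t$-free integrands on the diffuse part and disintegrate correctly over $C_u$ and $J_u$ before summing the two contributions.
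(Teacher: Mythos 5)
Your proposal is correct and follows essentially the same route as the paper's own proof: the coarea formula of Theorem~\ref{p:coarea} (applied also to the $t$-independent field $\bsmall_\tau$) to write the difference as $\int_\R \langle(\bsmall_t-\bsmall_\tau, D\chi_{\{u>t\}}),\varphi\rangle\,dt$, the sup-norm bound on the pairing combined with assumption (ii), and then the $BV$ coarea formula with the splitting of $\spt\varphi$ into $\spt\varphi\setminus J_u$ and $J_u\cap\spt\varphi$, using (a$^\prime$)--(b$^\prime$) on the diffuse part and Fubini on the jump part. The final ``bookkeeping'' step you flag as the main obstacle is handled in the paper exactly as you describe, so no changes are needed.
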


\begin{proof}
Using the coarea formula \eqref{f:coarea} and (ii)
we obtain that
\begin{equation*}
\begin{split}
I_\tau & :=  \left|\langle(\bsmall(\cdot,u), Du),\varphi\rangle
- \langle(\bsmall_\tau, Du),\varphi\rangle\right|
= \left| \int_\R  \langle(\bsmall_t - \bsmall_\tau, D\chi_{\{u>t\}}),\varphi\rangle\,dt \right|
\\ & \leq
\|\varphi\|_\infty
\int_\R \int_{\spt\varphi} \|\bsmall_t - \bsmall_\tau\|_\infty\, d |D\chi_{\{u>t\}}|\, dt
\\ & \leq
L\, \|\varphi\|_\infty
\int_\R \int_{\spt\varphi} |t - \tau| \, d |D\chi_{\{u>t\}}|\, dt\,.
\end{split}
\end{equation*}
We now consider $\spt\varphi$ as the disjoint union of
$\spt\varphi\setminus J_u$ and  $J_u\cap {\spt\varphi}$,
and we use the coarea formula in $BV$, obtaining
\begin{equation*}
\begin{split}
I_\tau & \leq
L\, \|\varphi\|_\infty
\left[\int_\R \int_{{\spt\varphi}\setminus J_u} |t - \tau| \, d |D\chi_{\{u>t\}}|\, dt
+ \int_\R \int_{J_u\cap {\spt\varphi}} |t - \tau| \, d |D\chi_{\{u>t\}}|\, dt\right]
\\ & =
L \, \|\varphi\|_\infty \left[\int_{{\spt\varphi}} |\ut-\tau|\, d |D^d u|
+ \int_{J_u\cap {\spt\varphi}} \left(\int_{u^-}^{u^+} |t-\tau|\, dt\right) d \Haus{N-1}
\right]\,,
\end{split}
\end{equation*}
where we used the fact that
$\Haus{N-1}(\Omega\setminus(C_u \cup J_u)) = 0$ so that also
$|D^d u|(\Omega\setminus(C_u \cup J_u)) = 0$.
\end{proof}

\section{Integral representation of the pairing}
\label{repres}

In this section we are interested in finding an integral representation of the pairing measure and of its total variation. 
We prove that the pairing measure can be represented by an integral functional defined on the space $BV(\Omega)$,
provided that in the support of the singular part of the measure
we choose a suitable precise representative of the vector field $\bsmall$. 

We recall the general form of an integral functional defined in $BV(\Omega)$.
Given the integrand $f(x,t,\xi)=\bsmall(x,t)\cdot\xi$,
for every open set $A\subset\Omega$, let us define the functional 
$\F(\cdot,A)\colon BV(\Om)\to]-\infty,+\infty]$ by setting
\begin{equation*}
\begin{split}
\F(u,A) = {} & \int_A\bsmall(x,u)\cdot \nabla u \, dx 
\\ & +
\int_A\overline f\Bigl(x,\preciso{u},\frac{D^c u}{|D^c u|}\Bigr)\, d|D^c u|
+ \int_{J_u\cap A}d\H^{N-1}\int_{u^-}^{u^+}\overline f(x,t,\nu_u)\,dt,
\end{split}
\end{equation*}
where $\overline f(\cdot,s,\xi)$ is a proper precise representative of $f(\cdot,t,\xi)=\bsmall_t\cdot\xi$,
$\ut(x)$ denotes the approximate limit of $u$ at $x$, and
$(u^+(x), u^-(x), \nu_u(x))$ are defined in Section~\ref{s:relax}.
\medskip

We show that, in our case, this representative is 
the limit of cylindrical averages introduced in \cite{Anz2} for vector fields $\bsmall(x)$ whose divergence belongs to $L^1$.


\begin{theorem}[Integral representation of the pairing measure]
\label{t:repr2}
Let $\bsmall$
satisfy assumptions (i)--(iv),
and let $u\in  \BVLloc$.
Then it holds that
\begin{equation*}
\begin{split}
(\bsmall(\cdot,u), Du) = {} &
\Cyl{\bsmall_{\ut}}{\nu_u}{\cdot}\, |D^d u|
+ \left(\mean{u^-}^{u^+} \Cyl{\bsmall_t}{\nu_u}{\cdot}\, dt\right)
\, |D^j u|.
\end{split}
\end{equation*}
In other words,
the density \(\Theta\) defined in~\eqref{f:Theta} is given by
\begin{equation*}
\Theta(\bsmall, u; x) = 
\begin{cases}
\Cyl{\bsmall_{\ut(x)}}{\nu_u}{x}\,,
& \text{$|D^d u|$--a.e.\ $x\in\Omega$},
\\
\displaystyle\mean{u^-(x)}^{u^+(x)} \Cyl{\bsmall_t}{\nu_u}{x}\, dt
\,,
& \text{$\H^{N-1}$--a.e.\  $x\in J_u$}.
\end{cases}
\end{equation*}
Moreover,
\(
\Cyl{\bsmall_{\ut(x)}}{\nu_u}{x} |\nabla u(x)| = 
{\bsmall(x, u(x))}\cdot {\nabla u(x)}
\)
for $\LLN$-a.e.\ $x\in\Omega$.
\end{theorem}

\begin{proof}
By assumption (iii), 
we can employ \cite[Theorems 2.6 and 3.6]{Anz2}
to deduce that,
for every $t\in\R$,
\begin{gather*}
\frac{d (\bsmall_t, Du)}{d |D u|}
= \Cyl{\bsmall_t}{\nu_u}{x},
\qquad
\text{$|D^d u|$--a.e.\ in $\Omega$},
\\
\Trp{\B_t}{J_u} = \Trm{\B_t}{J_u} =
\Cyl{\B_t}{\nu_u}{\cdot},
\qquad
\text{$\H^{N-1}$--a.e.\ in $J_u$}\,.
\end{gather*}

The representation of the jump part (i.e.\ of $\Theta$ on $J_u$)
follows directly from \cite[Theorem~4.6]{CD4}
and the simple computation
\[
\Cyl{\B_{u^+(x)}}{\nu_u}{x} - \Cyl{\B_{u^-(x)}}{\nu_u}{x}
= \int_{u^-(x)}^{u^+(x)} \Cyl{\bsmall_t}{\nu_u}{x}\, dt\,.
\]
It remains to prove that
\[
\Theta(\bsmall, Du; \cdot) = 
\Cyl{\bsmall_{\ut}}{\nu_u}{\cdot}
\qquad \text{$|D^d u|$--a.e.\ in $\Omega$}\,.
\]
First, we remark that there exists a Borel set
$N\subset\Omega$, with $|D^d u|(N) = 0$,
such that the limit of cylindrical averages $\Cyl{\bsmall_t}{\nu_u}{\cdot}$
exists for every $x\in\Omega\setminus N$ and every $t\in\R$
(see e.g.\ the proof of \cite[Lemma~4.2]{CD4}).
As a consequence, the map 
$x\mapsto \Cyl{\bsmall_{\ut(x)}}{\nu_u}{x}$
belongs to $L^\infty_{\rm loc}(\Omega, |D^d u|)$.

To simplify the notation, let us denote by
$\mu := (\bsmall(\cdot, u), Du)$ the pairing measure
and let $\mu^d$ denote its diffuse part.
We have to prove that
\[
\frac{d\mu^d}{d|Du|}(x) =
\Cyl{\bsmall_{\ut(x)}}{\nu_u}{x}\,,
\quad
\text{for $|D^d u|$--a.e.\ $x\in \Omega$.}
\]

Let us choose $x\in \Omega$ such that
\begin{itemize}
	\item[(a)]
	\(x\) belongs to the support of \(D^d u\), that is
	\(|D^d u|(B_r(x)) > 0\) for every \(r>0\);
	\item[(b)] 
	there exists the limit
	\(\displaystyle
\lim_{r\downarrow 0}
\frac{\mu^d(B_r(x))}{|D u|(B_r(x))} = \frac{d\mu^d}{d|Du|}(x)\,;
	\)
	\item[(c)]
	\(\displaystyle
\lim_{r\downarrow 0}\frac{|D^j u|(B_r(x))}{|D u|(B_r(x))}=0
	\);

	\item[(d)]
	\(\displaystyle
\lim_{r\downarrow 0}\frac{(\bsmall_{\ut(x)}, Du)(B_r(x))}{|D u|(B_r(x))}
=	\Cyl{\bsmall_{\ut(x)}}{\nu_u}{x}
	\);
\item[(e)]
	\(\displaystyle
\lim_{r\downarrow 0}
\frac{1}{|Du|(B_r(x))}
\int_{B_r(x)} |\ut(y)-\ut(x)|\, d |D^d u|(y) = 0
\).
\end{itemize}
We remark that these conditions are satisfied
for $|D^d u|$-a.e.\ $x\in\Omega$.
In particular, (e) holds since $|D^d u|$-a.e.\ $x\in\Omega$
is a Lebesgue point of $\ut$ with respect to $|Du|$.

Since
\[
\begin{split}
& \left|
\frac{(\bsmall(\cdot, u), Du)(B_r(x))}{|Du|(B_r(x))}
-  \Cyl{\bsmall_{\ut(x)}}{\nu_u}{x}
\right|
\\ & \leq
\left|
\frac{(\bsmall(\cdot, u), Du)(B_r(x))}{|Du|(B_r(x))}
-  \frac{(\bsmall_{\ut(x)}, Du)(B_r(x))}{|Du|(B_r(x))}
\right|
+
\left|
\frac{(\bsmall_{\ut(x)}, Du)(B_r(x))}{|Du|(B_r(x))}
-  \Cyl{\bsmall_{\ut(x)}}{\nu_u}{x}
\right|,
\end{split}
\]
by (d) it is enough to prove that
\begin{equation}\label{f:diffuse}
I_r := \left|\frac{(\bsmall(\cdot, u), Du)(B_r(x))}{|Du|(B_r(x))}
-  \frac{(\bsmall_{\ut(x)}, Du)(B_r(x))}{|Du|(B_r(x))}
\right| \longrightarrow 0\,,
\qquad\text{as}\ r\searrow 0,
\end{equation}
i.e.
\[
\frac{d(\bsmall(\cdot, u), Du)}{d|Du|}(x)
=  \frac{d(\bsmall_{\ut(x)}, Du)}{d|Du|}(x)\,.
\]

By Lemma~\ref{l:lip},
choosing $\tau = \ut(x)$ and
taking a sequence $\phi_j\in C^\infty_c(B_r(x))$,
$\phi_j(y)\to 1$ in $B_r(x)$, with \(0\leq \phi_j \leq 1\), we get
\[
\begin{split}
I_r \leq {} &
\frac{L}{|Du|(B_r(x))}
\bigg[
\int_{B_r(x)} |\ut(y)-\ut(x)|\, d |D^d u|(y)
\\ & 
+ \int_{B_r(x) \cap J_u} \left(
\int_{u^-(y)}^{u^+(y)} |t - \ut(x)|\, dt
\right)
\, d\H^{N-1}(y)
\bigg]
\\ \leq {} &
\frac{L}{|Du|(B_r(x))}
\int_{B_r(x)} |\ut(y)-\ut(x)|\, d |D^d u|(y)
+
2L \|u\|_\infty \frac{|D^j u|(B_r(x))}{|Du|(B_r(x))}\,.
\end{split}
\]
Finally, by (c) and (e)
we conclude that \eqref{f:diffuse} holds.
\end{proof}

\begin{corollary}
[Integral representation of the pairing functional]
Let $\bsmall$
satisfy assumptions (i)--(iv),
and let $u\in  \BVLloc$.
Then it holds that
\begin{equation*}
\begin{split}
\int_\Omega\phi\, d(\bsmall(x,u),Du)= 
\int_\Omega\phi\mean{u^-}^{u^+} \Cyl{\bsmall_t}{\nu_u}{x}\, dt
\, d|Du|,
\qquad \phi\in C_c(\Omega)\,,
\end{split}
\end{equation*}
where we use the compact notation
\begin{equation*}
\begin{split}
\mean{u^-}^{u^+} \Cyl{\bsmall_t}{\nu_u}{x}\, dt
|Du|:= 
{} &
\bsmall(x,u)\cdot\nabla u\,\LLN
+ \Cyl{\bsmall_{\ut}}{\nu_u}{x}\, |D^c u|
\\ & 
+ \mean{u^-}^{u^+} \Cyl{\bsmall_t}{\nu_u}{x}\, dt
|D^j u|.
\end{split}
\end{equation*}
\end{corollary}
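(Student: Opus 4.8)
The plan is to read the claim off directly from Theorem~\ref{t:repr2}, the only real work being to split the diffuse part $|D^d u|$ into its absolutely continuous and Cantor components. First I would recall that $D^d u = D^a u + D^c u$ with $D^a u = \nabla u\,\LLN$, and that these two measures are mutually singular; hence their total variations add, so that $|D^d u| = |\nabla u|\,\LLN + |D^c u|$ (this is where one invokes mutual singularity, see \cite[Proposition~1.23]{AFP}).

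Starting from the measure identity of Theorem~\ref{t:repr2},
\[
(\bsmall(\cdot,u), Du) = \Cyl{\bsmall_{\ut}}{\nu_u}{\cdot}\, |D^d u|
+ \left(\mean{u^-}^{u^+} \Cyl{\bsmall_t}{\nu_u}{\cdot}\, dt\right)|D^j u|,
\]
I would restrict the first summand to the two mutually singular pieces of $|D^d u|$. On the absolutely continuous piece the final identity of Theorem~\ref{t:repr2}, namely $\Cyl{\bsmall_{\ut(x)}}{\nu_u}{x}\,|\nabla u(x)| = \bsmall(x,u(x))\cdot\nabla u(x)$ for $\LLN$-a.e.\ $x$, converts $\Cyl{\bsmall_{\ut}}{\nu_u}{\cdot}\,|D^a u|$ into $\bsmall(x,u)\cdot\nabla u\,dx$ (here one uses that $\ut = u$ holds $\LLN$-a.e., since $\LLN(S_u)=0$). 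On the Cantor piece the term is left unchanged as $\Cyl{\bsmall_{\ut}}{\nu_u}{\cdot}\,|D^c u|$, while the jump term already matches. This reproduces exactly the three-term measure defined in the compact notation \eqref{modulo1}, so the pairing measure coincides with it as a measure on $\Omega$.

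Integrating this measure identity against an arbitrary $\phi\in C_c(\Omega)$ then yields \eqref{modulo}. I do not expect a genuine obstacle here: the statement is essentially an unpacking of Theorem~\ref{t:repr2}, and the only point that deserves a word of justification is the additive decomposition $|D^d u| = |\nabla u|\,\LLN + |D^c u|$ together with the passage from the measure equality to its pairing against the continuous test function $\phi$, both of which are routine.
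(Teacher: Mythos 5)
Your proposal is correct and is exactly the intended derivation: the paper states this corollary without proof precisely because it is the routine unpacking of Theorem~\ref{t:repr2} that you carry out, namely splitting $|D^d u|$ into the mutually singular pieces $|\nabla u|\,\LLN$ and $|D^c u|$, converting the absolutely continuous piece via the identity $\Cyl{\bsmall_{\ut(x)}}{\nu_u}{x}\,|\nabla u(x)| = \bsmall(x,u(x))\cdot\nabla u(x)$, and then testing the resulting measure identity against $\phi\in C_c(\Omega)$. No gaps: the two points you flag (additivity of total variations for mutually singular measures, and passing from equality of Radon measures to equality of their integrals against continuous compactly supported functions) are indeed the only justifications needed, and both are handled correctly.
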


\begin{remark}
\label{r:53}
As a direct consequence of the above corollary, it holds that
\begin{equation*}
\frac{d(\bsmall(\cdot,u),Du)}{d|D^c u|}(x)
=\Cyl{\bsmall_{\ut}}{\nu_u}{x}
\end{equation*}
for $|D^c u|$-a.e. $x\in\Omega$.
\end{remark}

\begin{theorem}\label{t:repr3}
Let $\bsmall$
satisfy assumptions (i)--(iv), and $u\in\BVLloc$.
Then,
for $|Du|$-a.e.\ $x\in \Omega$,
\begin{equation}
\label{f:repr3}
\Theta(\bsmall, u; x) =
\mean{u^-(x)}^{u^+(x)}
\Trace[]{\bsmall_t}{\partial^* \{u>t\} }(x)
\, dt\,,
\end{equation}
where we use the convention $\mean{a}^a f(t)\, dt := f(a)$.
In particular, 
\begin{equation}
\label{f:reprC}
\Theta(\bsmall, u; x) =
\Trace[]{\bsmall(\cdot,u)}{\partial^* \{u>\widetilde u(x)\} }(x),
\qquad
\text{for $|D^du|$-a.e.\ $x\in \Omega$}.
\end{equation}
\end{theorem}

\begin{proof}
It suffices to prove that, for every Borel set $B\subset \Omega$,
it holds that
\begin{equation}\label{f:repr177}
(\bsmall(\cdot,u), Du) (B) =\int_B
\mean{u^-(x)}^{u^+(x)}
\Trace[]{\bsmall_t}{\partial^* \{u>t\} }(x)
\, dt\,d|Du|.
\end{equation}
For every $t\in\R$ such that $\partial^* \{u > t\}$ is locally of finite perimeter
(and hence for a.e.\ $t\in\R$),
by Corollary \ref{c:paironch} 
we deduce that
\begin{equation}\label{f:repr100}
\Theta(\bsmall_t, D\chi_{\{u > t\}}; x)
= \Trace[]{\bsmall_t}{\partial^* \{u > t\} }(x)
\quad
\text{for $\hh$-a.e.}\ x\in \partial^* \{u > t\}.
\end{equation}
Then, by using the coarea formula \eqref{f:coareaB} for the pairing measure, formula \eqref{f:repr100}, Fubini's theorem and Theorem~\ref{coarea}  we get
\[
\begin{split}
& (\bsmall(\cdot,u), Du) (B) 
=\int_{\R}  (\bsmall_t, D\chi_{\{u>t\}}) (B)\, dt
=\int_{\R} \int_{B\cap \partial^*\{u>t\}} \Trace[]{\bsmall_t}{\partial^*\{u>t\}}
	d\hh\, dt
\\
& =\int_{B\setminus J_u}
\Trace[]{\bsmall_t}{\partial^* \{u>\ut(x)\} }
\,d\H^{N-1}+
\int_{B\cap J_u}
\int_{u^-(x)}^{u^+(x)}
\Trace[]{\bsmall_t}{\partial^* \{u>t\} }
\, dt\,d\H^{N-1}
\\
& =
\int_B
\mean{u^-(x)}^{u^+(x)}
\Trace[]{\bsmall_t}{\partial^* \{u>t\} }
\, dt\,d|Du|\,,
\end{split}
\]
where in the last equality we have used that $|D^ju|=(u^+-u^-)\H^{N-1}\res J_u$,
so that \eqref{f:repr177} is proved.
\end{proof}

\section{Lower semicontinuity of the pairing}
\label{ss:lower}

In this section, by using the nonautonomous chain rule formula \eqref{lolo} for the divergence, we study the lower semicontinuity with respect to the 
$L^1$ 
convergence 
of the functionals
$F, G^+\colon \BVL\to\R$ defined by
\[
F(u):= \int_\Omega d|(\bsmall(x, u),Du)|\,,
\quad
G(u):= \int_\Omega d(\bsmall(x, u),Du)\,,
\quad
G^+(u):= \int_\Omega d(\bsmall(x, u),Du)^+\,.
\]

We start by proving the following continuity result (see \cite{dcl} for the
analogous result in $W^{1,1}$).

\begin{proposition}\label{p:cont}
Let $\bsmall$
satisfy assumptions (i)--(iv),
let $\varphi\in C^1_c(\Omega)$ be a fixed test function,
and let $G_\varphi\colon \BVL\to\R$ be the functional defined by
\[
G_\varphi(u) :=
\pscal{(\bsmall(x, u), Du)}{\varphi}= \int_\Omega \varphi\, d (\bsmall(x, u),Du)\,,
\qquad u\in\BVL.
\]
Let $(u_j) \subset  \BVL$ and $u\in\BVL$ satisfy
one of the following assumptions:
\begin{itemize}
\item[(a)]
the sequence $(u_j)$ converges strongly to $u$ in $L^1_{\rm loc}(\Omega)$, and
\begin{equation*}
\Lambda := \sup_j \|u_j\|_{L^\infty(\Omega)} < +\infty\,;
\end{equation*}
\item[(b)]
$\sigma\in L^\infty_{\rm loc}(\Omega)$
and $(u_j)$ converges strongly to $u$ in $L^1_{\rm loc}(\Omega)$;
\item[(c)]
the function $\sigma$ defined in~\eqref{f:sigma} satisfies the stronger condition
$\sigma\in L^N_{\rm loc}(\Omega)$,
and
$(u_j)$ converges to $u$ weakly$^*$ in $BV(\Omega)$.
\end{itemize}
If either (a) or (b) or (c) holds, then
\begin{equation*}
\lim_{j\to +\infty} G_\varphi(u_j) = G_\varphi(u).
\end{equation*}
%
\end{proposition}

\begin{proof}
Assume that (a) holds.
Using \eqref{f:bxurepr} we have that
\[
G_\varphi(u_j) - G_\varphi(u) =
-\int_\Omega \varphi(x) \int_{u(x)}^{u_j(x)} \Div_x \bsmall_t(x)\, dt\, dx
- \int_\Omega \int_{u(x)}^{u_j(x)} \bsmall_t(x)\cdot \nabla\varphi(x)\, dt\, dx.
\]
Since $\bsmall$ is a locally bounded vector field, the second integral converges to $0$
by the Lebesgue Dominated Convergence theorem.
The first integral can be written as
\begin{equation}\label{f:fi}
\iint_{K\times[-\Lambda, \Lambda]} \sign(u(x) - u_j(x))
\chi_{D_j}(x,s)\, \Div_x\bsmall_s(x)\, dx\, ds,
\end{equation}
where $K\subset\Omega$ is the support of $\varphi$ and 
$D_j \subset\Omega\times [-\Lambda, \Lambda]$ is the set of pairs $(x,s)$ such that
$s$ belongs to the segment of endpoints $u(x)$ and $u_j(x)$.
Since
\[
\left|
\chi_{D_j}(x,s) \varphi(x)\, \Div_x \bsmall_s(x)
\right|
\leq
\|\varphi\|_{L^\infty(\Omega)}
\, |\Div_x \bsmall_s(x)|
\leq
\|\varphi\|_{L^\infty(\Omega)}
\, \sigma(x)
\in L^1(K\times[-\Lambda, \Lambda]),
\]
the integral \eqref{f:fi} converges to $0$ by the Lebesgue Dominated Convergence theorem.

Assume now that assumption (c) holds.
If the sequence $(u_j) \subset  \BVL$ weak$^*$-converges to
$u\in \BVL$, by the Poincaré inequality
(see \cite[Remark~3.50]{AFP})
 we have that $(u_j)$ weakly converges to
$u$ in $L^{\frac N{N-1}}(\Omega)$.
Since
\[
\left|\int_\Omega \varphi(x) \int_{u(x)}^{u_j(x)} \Div_x \bsmall_t(x)\, dt\, dx\right|\leq
\|\varphi\|_\infty
\int_K |u_j(x)-u(x)|  \sigma(x)\, dx,
\]
then, if $\sigma\in L^N_{\rm loc}(\Omega)$,
 the integral on the right-hand side converges to $0$. 

We arrive at the same conclusion also if assumption (c) holds, that is, if $(u_j)$ converges to
$u$ strongly in $L^1_{\rm loc}(\Omega)$ and $\sigma\in L^\infty_{\rm loc}(\Omega)$.
\end{proof}

\begin{remark}
In \cite{DM} Dal Maso proved the lower semicontinuity
of integral functionals with coercive integrands and he showed, by exploiting Aronszajn's example, that
this result is sharp, in the sense that, in general, 
the coercivity assumption cannot be dropped. 
Indeed, Dal Maso constructed a continuous function
$\omega\colon\Omega\to \R$, where $\Omega=(0,1)\times (0,1)$ and
$x=(x_1,x_2)$, and a sequence of functions $\{u_n\}$
 converging to $u(x)=x_2$ in $L^\infty(\Omega)$, such that
\[
\int_\Omega |(\sin \omega (x),\cos \omega(x))\cdot\nabla u(x))|\,dx
>\liminf_{n\to\infty}
\int_\Omega |(\sin \omega(x),\cos \omega(x))\cdot\nabla u_n(x))|\,dx.
\]
Let us remark that the integrand $|\bsmall(x)\cdot\xi|$ of Dal Maso's example does not satisfy our condition $\Div\bsmall\in L^1$.
\end{remark}

\begin{theorem}\label{p:lsc}
[Lower semicontinuity of $F$ and $G^+$]
Let $\bsmall$
satisfy assumptions (i)--(iv). 
Let $(u_j) \subset  \BVL$ and $u\in\BVL$ satisfy
one of the assumptions (a), (b) or (c) listed
in Proposition~\ref{p:cont}.
Then
\[
F(u) \leq \liminf_{j\to +\infty} F(u_j),
\qquad
G^+(u) \leq \liminf_{j\to +\infty} G^+(u_j).
\]
\end{theorem}

\begin{proof}
Let us define the auxiliary functionals
$H, H^+\colon\BVL\to\R$ by
\[
H(u):= -\int_\Omega d(\bsmall(x, u),Du)\,,
\qquad
H^+(u):= \int_\Omega d(-(\bsmall(x, u),Du))^+\,.
\]
Since $F(u)=G^+(u)+H^+(u)$,
it suffices to prove that $G^+(u)$ and $H^+(u)$ are lower semicontinuous on $\BVL$ with respect to the 
$L^1$ convergence. 
We shall prove the claim only for $G^+$, being the proof for $H^+$ similar.

Let $\Phi$ denote the  set of all functions $\varphi\in C^1_c(\Omega)$ such that $0\leq\varphi\leq 1$.
Since
\begin{equation*}
G^+(v) = \sup_{\varphi\in\Phi}\ G_\varphi (v),
\qquad
\forall v\in \BVL,
\end{equation*}
the conclusion follows from Proposition~\ref{p:cont}.
\end{proof}


\section{Pairing as relaxed functional}
\label{s:relax}

For every function $\varphi\in C^1_c(\Omega)$ 
and for every open set $A\subseteq\Omega$
let us consider  the functional 
$F_\varphi(\cdot, A):\BVLloc\to\R\cup\{+\infty\}$ 
defined in~\eqref{f:Fphi}.
Moreover, for every $u\in\BVLloc$
we consider its relaxation
$\rel{F_\varphi}(u, A)$ 
with respect to the weak$^*$ convergence in $\BVLloc$, defined in \eqref{relax1},
and with respect to the $L^1_{\rm loc}(\Omega)$ convergence, defined by
\[
\rel{F^1_\varphi}(u, A) :=
\inf_{\{u_n\}} 
\left\{
\liminf_{n\to +\infty}
F_\varphi(u_n, A)\colon
u_n\in W^{1,1}_{\text{loc}}(\Omega)\cap L^\infty_{\rm loc}(\Omega),\
u_n\to u\ \text{in}\ L^1_{\rm loc}(\Omega)
\right\}
\,.
\]

\begin{theorem}
\label{t:limsupG}
[Integral representation of the relaxed functionals of $F_\varphi$]
Let $\bsmall$
satisfy assumptions (i)--(iv).
Then for every 
$u\in  \BVLloc$ 
and for every 
open set $A\subseteq\Omega$,
if we assume $\sigma\in L^N_{\rm {loc}}(\Omega)$, then it holds that
\begin{equation*}
\rel{F_\varphi}(u, A)=\int_A \varphi\, d (\bsmall(x,u),Du)
\end{equation*}
whereas, if we assume $\sigma\in L^\infty_{\rm {loc}}(\Omega)$, then it holds that
\begin{equation*}
\rel{F^1_\varphi}(u, A)=\int_A\varphi\, d (\bsmall(x,u),Du).
\end{equation*}
\end{theorem}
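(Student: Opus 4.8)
The plan is to establish the two relaxation inequalities separately, after the preliminary observation that on Sobolev functions the pairing reduces to the classical integrand. Indeed, for every $v\in W^{1,1}_{\rm loc}(\Omega)\cap L^\infty_{\rm loc}(\Omega)$ one has $(\bsmall(\cdot,v),Dv)=\bsmall(x,v)\cdot\nabla v\,\LLN$ by Theorem~\ref{t:chainb4}, so that $F^\varphi(v,A)=\int_A\varphi\,d(\bsmall(x,v),Dv)$. Setting $\Psi(u,A):=\int_A\varphi\,d(\bsmall(x,u),Du)$, the target therefore coincides with $F^\varphi$ on $W^{1,1}_{\rm loc}$ and satisfies $\Psi(u,A)\le F^\varphi(u,A)$ for every $u\in\BVLloc$, since $F^\varphi=+\infty$ off $W^{1,1}_{\rm loc}$. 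It then suffices to prove that $\Psi(\cdot,A)$ is sequentially lower semicontinuous for the relevant convergence (the liminf inequality) and that its value is attained as a limit along $W^{1,1}_{\rm loc}$ competitors (the limsup inequality).

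First I would prove the liminf inequality $\Psi(u,A)\le\rel{F^\varphi}(u,A)$. Under the hypothesis $\sigma\in L^N_{\rm loc}(\Omega)$ (resp.\ $\sigma\in L^\infty_{\rm loc}(\Omega)$), Proposition~\ref{p:cont} guarantees that $v\mapsto\int_\Omega\varphi\,d(\bsmall(x,v),Dv)$ is continuous, hence in particular lower semicontinuous, along weak$^*$ convergence in $BV$ (resp.\ strong $L^1_{\rm loc}$ convergence). A localization of this statement to the open set $A$ and to the portion of $\spt\varphi$ meeting $A$ yields the lower semicontinuity of $\Psi(\cdot,A)$ for the same convergence. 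Since $\Psi(\cdot,A)$ is lower semicontinuous and dominated by $F^\varphi(\cdot,A)$, it is dominated by the greatest lower semicontinuous minorant, namely the relaxed functional, giving $\Psi(u,A)\le\rel{F^\varphi}(u,A)$ (resp.\ $\le\rel{F_1^\varphi}(u,A)$).

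For the limsup inequality $\rel{F^\varphi}(u,A)\le\Psi(u,A)$ I would exhibit an admissible recovery sequence. Mollifying $u$ (or invoking a strict approximation) produces $u_n\in W^{1,1}_{\rm loc}(A)\cap L^\infty$ with $u_n\weakstarto u$ in $BV_{\rm loc}(A)$ in the first case, and $u_n\to u$ strongly in $L^1(A)$ in the second. Along such a sequence $F^\varphi(u_n,A)=\Psi(u_n,A)$, and the continuity furnished by Proposition~\ref{p:cont} forces $\Psi(u_n,A)\to\Psi(u,A)$; taking the infimum over competitors then yields $\rel{F^\varphi}(u,A)\le\Psi(u,A)$. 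The subtle point is that the candidate limit $\Psi(u,A)$ already carries the singular contributions $\Cyl{\bsmall_{\ut}}{\nu_u}{\cdot}\,|D^cu|$ and the jump average against $|D^ju|$; the fact that these are correctly reproduced in the limit is exactly the content of the integral representation of Theorem~\ref{t:repr2}, whose cylindrical-average density is the precise representative singled out by the blow-up analysis around points of the Cantor and jump parts.

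I expect the main obstacle to be the limsup inequality at the level of the singular part. The danger is that a crude approximation might charge the boundary $\partial A$ or fail to realize the precise representative $\Cyl{\bsmall_t}{\nu_u}{x}$ on $J_u$ and on the Cantor set; controlling this is precisely where the cylindrical-average representation of Theorem~\ref{t:repr2} (equivalently, the normal-trace representation of Theorem~\ref{t:repr3}) and the blow-up localization are indispensable. A secondary technical difficulty is the dichotomy between the two topologies: the assumption $\sigma\in L^N_{\rm loc}(\Omega)$ is exactly what allows passage to the limit along weak$^*$ sequences without an a priori uniform $L^\infty$ bound, via the Poincar\'e inequality and weak convergence in $L^{N/(N-1)}$, whereas $\sigma\in L^\infty_{\rm loc}(\Omega)$ is the sharp integrability permitting the same passage under mere strong $L^1_{\rm loc}$ convergence.
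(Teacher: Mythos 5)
Your liminf half coincides with the paper's: both deduce $\rel{F^\varphi}(u,A)\geq\int_A\varphi\,d(\bsmall(x,u),Du)$ from the continuity statement of Proposition~\ref{p:cont} (and both gloss over the same localization issue, on which more below). Your limsup half, however, is \emph{not} the paper's argument. The paper never produces a single global recovery sequence: invoking \cite[Theorem~1.3, part~(i)]{FL1}, it reduces the upper bound to two pointwise density estimates --- (J) at $\H^{N-1}$-a.e.\ point of $J_u$ and (C) at $|D^cu|$-a.e.\ point --- and proves each by a blow-up on balls $B_r(x_0)$, using mollifications as competitors for $\rel{F^\varphi}(u,B_r(x_0))$, the convergence $\int_{B_r(x_0)}\varphi\,\bsmall(y,\rho_\varepsilon\ast u)\cdot\nabla(\rho_\varepsilon\ast u)\,dy\to\int_{B_r(x_0)}\varphi\,d(\bsmall(\cdot,u),Du)$ from \cite[Theorem~4.3, formula~(38)]{CD4}, Besicovitch differentiation, and finally Theorem~\ref{t:repr2} to identify the resulting densities with the cylindrical averages.

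The gap in your version is the step ``the continuity furnished by Proposition~\ref{p:cont} forces $\Psi(u_n,A)\to\Psi(u,A)$'' along mollifications. Proposition~\ref{p:cont} gives convergence of $\int_\Omega\varphi\,d(\bsmall(\cdot,u_n),Du_n)$ for $\varphi\in C^1_c(\Omega)$; it says nothing about the restriction of these integrals to a fixed open set $A$ near whose boundary $\varphi$ need not vanish, and that restricted convergence is false in general. Concretely: $N=1$, $\Omega=(-1,2)$, $A=(0,1)$, $\bsmall\equiv 1$, $u=\chi_{(0,1)}$, $\rho$ even. Then $(\bsmall,Du)=\delta_0-\delta_1$ charges no point of $A$, so $\Psi(u,A)=0$, while $F^\varphi(\rho_\varepsilon\ast u,A)=\int_0^1\varphi(x)\bigl[\rho_\varepsilon(x)-\rho_\varepsilon(x-1)\bigr]\,dx\to\tfrac12\bigl[\varphi(0)-\varphi(1)\bigr]$: the mollifier splits each boundary jump in half, so the mollified sequence is not a recovery sequence for such $A$. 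No appeal to Theorem~\ref{t:repr2} can repair this, since that theorem is an identity for the limit measure $(\bsmall(\cdot,u),Du)$ and says nothing about approximating sequences; what repairs it in the paper is precisely the Fonseca--Leoni localization, in which the blow-up radii can be chosen so that the relevant spheres carry no mass. (Your closing paragraph calls the blow-up ``indispensable'', but it never actually enters your argument.) The same example, with $\varphi(0)<\varphi(1)$, shows that the localized liminf inequality is equally delicate when $\spt\varphi$ meets $\partial A\cap\Omega$, so the meaningful setting --- implicitly the one treated by the paper's density approach --- is $\spt\varphi\Subset A$ or $A=\Omega$. In that setting your shortcut is correct and genuinely simpler than the paper's proof; moreover, there the first part of Proposition~\ref{p:cont} already suffices for the limsup, since mollifications are uniformly bounded in $L^\infty$, the strengthened hypotheses on $\sigma$ being needed only for the liminf, as you correctly observe. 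But as written your argument does not prove the statement for arbitrary open $A\subset\Omega$.
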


\begin{proof}
We shall prove the result only for $\rel{F_\varphi}$.
Thanks to the continuity results proved
in Proposition~\ref{p:cont} 
and the argument in \cite[Theorem 1.3]{FL1},
it is enough to prove the following two inequalities:
\begin{itemize}
\item[(J)] for $\H^{N-1}$-a.e. $x_0\in J_u$, it holds that
\[
\frac{d\rel{F_\varphi}(u,\cdot)}{d\H^{N-1}\ristretto{J_u}} (x_0)
\leq \varphi(x_0)\int_{u^-(x_0)}^{u^+(x_0)}\!
\Cyl{\bsmall(\cdot,t)}{\nu_u(x_0)}{x_0}\,dt\,;
\]
\item[(C)] for $\vert D^c u\vert$-a.e. $x_0\in \Omega$, it holds that
\[
\frac{d\rel{F_\varphi}(u,\cdot)}{d\vert D^c u\vert} (x_0)
\leq  \varphi(x_0)\Cyl{\bsmall(\cdot,\preciso{u}(x_0))}{\nu_u(x_0)}{x_0}\,.
\] 
\end{itemize}
Since both results are of local nature,
it is not restrictive to assume that $\Omega = \R^N$
and that
$u\in BV(\R^N)\cap L^\infty(\R^N)$.
Moreover, to simplify the notation we denote
\(\mu:=(\bsmall(\cdot, u),Du)\).

\bigskip
\noindent
\textbf{Proof of (J).}
By the definition of relaxed functional we have that
\[
\begin{split}
& \frac{d\rel{F_\varphi}(u,\cdot)}{d\H^{N-1}\ristretto{J_u}} (x_0)
= \lim_{r\searrow 0} \frac{\rel{F_\varphi}(u, B_r(x_0))}{\H^{N-1}\ristretto{J_u}(B_r(x_0))}
\\ & \quad \leq 
\lim_{r\searrow 0} 
\liminf_{\varepsilon\searrow 0}
\frac{1}{\H^{N-1}\ristretto{J_u}(B_r(x_0))}
\int_{B_r(x_0)} \varphi(y)\, \bsmall(y, \rho_\varepsilon\ast u (y))\cdot
\nabla(\rho_\varepsilon\ast u)(y)\, dy\,.
\end{split}
\]
As $\varepsilon\to 0^+$,
the integral above converges to
$\int_{B_r(x_0)} \varphi \, d\mu$
(see the proof of Theorem~4.3 in \cite{CD4}, where this convergence
is stated in formula (38)).
Hence,
\[
\begin{split}
\frac{d\rel{F_\varphi}(u,\cdot)}{d\H^{N-1}\ristretto{J_u}} (x_0)
& \leq
\lim_{r\searrow 0} 
\frac{1}{\H^{N-1}\ristretto{J_u}(B_r(x_0))}
\int_{B_r(x_0)} \varphi\, d\mu
\\ & =
\varphi(x_0) \,
\lim_{r\searrow 0}
\frac{|Du|(B_r(x_0))}{\H^{N-1}\ristretto{J_u}(B_r(x_0))}
\cdot
 \frac{\mu(B_r(x_0))}{|Du|(B_r(x_0))}
\\ & =
\varphi(x_0) \, [u^+(x_0) - u^-(x_0)] \, \Theta(\bsmall, u, x_0)\,,
\end{split}
\]
so that (J) follows from 
Theorem~\ref{t:repr2}.

\bigskip
\noindent
\textbf{Proof of (C).}
Reasoning as in the proof of (J) above, we have that
\[
\begin{split}
\frac{d\rel{F_\varphi}(u,\cdot)}{d |D^c u|} (x_0)
& = \lim_{r\searrow 0} \frac{\rel{F_\varphi}(u, B_r(x_0))}{|D^c u|(B_r(x_0))}
\\ & \leq 
\varphi(x_0) \,
\lim_{r\searrow 0}
\frac{\mu(B_r(x_0))}{|D^c u| (B_r(x_0))}
= 
\varphi(x_0)\, \Theta(\bsmall, u, x_0)\,,
\end{split}
\]
and the conclusion follows from 
Remark~\ref{r:53}.
\end{proof}

\medskip
\textbf{Acknowledgments.}
The authors want to thank two anonymous referees for carefully reading the manuscript and for giving constructive comments which helped improve the quality of the paper.


\def\cprime{$'$}
\begin{bibdiv}
\begin{biblist}

\bib{ADCF1}{article}{
      author={Amar, {M.}},
      author={De~Cicco, {V.}},
      author={Fusco, {N.}},
       title={A relaxation result in {BV} for integral functionals with
  discontinuous integrands},
        date={2007},
        ISSN={1292-8119},
     journal={ESAIM Control Optim. Calc. Var.},
      volume={13},
      number={2},
       pages={396\ndash 412},
         url={https://doi.org/10.1051/cocv:2007015},
      review={\MR{2306643}},
}

\bib{ADCF2}{article}{
      author={Amar, {M.}},
      author={De~Cicco, {V.}},
      author={Fusco, {N.}},
       title={Lower semicontinuity and relaxation results in {BV} for integral
  functionals with {BV} integrands},
        date={2008},
        ISSN={1292-8119},
     journal={ESAIM Control Optim. Calc. Var.},
      volume={14},
      number={3},
       pages={456\ndash 477},
         url={https://doi.org/10.1051/cocv:2007061},
      review={\MR{2434061}},
}

\bib{AmbCriMan}{article}{
      author={Ambrosio, {L.}},
      author={Crippa, {G.}},
      author={Maniglia, {S.}},
       title={Traces and fine properties of a {$BD$} class of vector fields and
  applications},
        date={2005},
        ISSN={0240-2963},
     journal={Ann. Fac. Sci. Toulouse Math. (6)},
      volume={14},
      number={4},
       pages={527\ndash 561},
         url={http://afst.cedram.org/item?id=AFST_2005_6_14_4_527_0},
      review={\MR{2188582}},
}

\bib{ADM}{incollection}{
      author={Ambrosio, {L.}},
      author={De~Lellis, {C.}},
      author={Mal\'y, {J.}},
       title={On the chain rule for the divergence of {BV}-like vector fields:
  applications, partial results, open problems},
        date={2007},
   booktitle={Perspectives in nonlinear partial differential equations},
      series={Contemp. Math.},
      volume={446},
   publisher={Amer. Math. Soc., Providence, RI},
       pages={31\ndash 67},
         url={http://dx.doi.org/10.1090/conm/446/08625},
      review={\MR{2373724}},
}

\bib{AFP}{book}{
      author={Ambrosio, {L.}},
      author={Fusco, {N.}},
      author={Pallara, {D.}},
       title={Functions of bounded variation and free discontinuity problems},
      series={Oxford Mathematical Monographs},
   publisher={The Clarendon Press Oxford University Press},
     address={New York},
        date={2000},
        ISBN={0-19-850245-1},
      review={\MR{MR1857292 
      }},
}

\bib{ABCM}{article}{
      author={Andreu, F.},
      author={Ballester, C.},
      author={Caselles, V.},
      author={Maz\'{o}n, J.M.},
       title={The {D}irichlet problem for the total variation flow},
        date={2001},
        ISSN={0022-1236},
     journal={J. Funct. Anal.},
      volume={180},
      number={2},
       pages={347\ndash 403},
         url={https://doi.org/10.1006/jfan.2000.3698},
      review={\MR{1814993}},
}

\bib{AVCM}{book}{
      author={Andreu-Vaillo, {F.}},
      author={Caselles, {V.}},
      author={Maz\'on, {J.M.}},
       title={Parabolic quasilinear equations minimizing linear growth
  functionals},
      series={Progress in Mathematics},
   publisher={Birkh\"auser Verlag, Basel},
        date={2004},
      volume={223},
        ISBN={3-7643-6619-2},
         url={http://dx.doi.org/10.1007/978-3-0348-7928-6},
      review={\MR{2033382}},
}

\bib{Anz}{article}{
      author={Anzellotti, {G.}},
       title={Pairings between measures and bounded functions and compensated
  compactness},
        date={1983},
        ISSN={0003-4622},
     journal={Ann. Mat. Pura Appl. (4)},
      volume={135},
       pages={293\ndash 318 (1984)},
         url={http://dx.doi.org/10.1007/BF01781073},
      review={\MR{750538}},
}

\bib{Anz2}{misc}{
      author={Anzellotti, {G.}},
       title={Traces of bounded vector--fields and the divergence theorem},
        date={1983},
        note={Unpublished preprint},
}

\bib{BouDM}{article}{
      author={Bouchitt\'e, G.},
      author={Dal~Maso, G.},
       title={Integral representation and relaxation of convex local
  functionals on {${\rm BV}(\Omega)$}},
        date={1993},
        ISSN={0391-173X},
     journal={Ann. Scuola Norm. Sup. Pisa Cl. Sci. (4)},
      volume={20},
      number={4},
       pages={483\ndash 533},
         url={http://www.numdam.org/item?id=ASNSP_1993_4_20_4_483_0},
      review={\MR{1267597}},
}

\bib{BuCoMi}{article}{
      author={Buffa, V.},
      author={Comi, G.E.},
      author={Miranda, Jr., M.},
       title={On {BV} functions and essentially bounded divergence-measure
              fields in metric spaces},
        date={2022},
     journal={Rev. Mat. Iberoam.},
      volume={38},
      number={3},
       pages={883\ndash 946},
}

\bib{Cas}{article}{
      author={Caselles, V.},
       title={On the entropy conditions for some flux limited diffusion
  equations},
        date={2011},
        ISSN={0022-0396},
     journal={J. Differential Equations},
      volume={250},
      number={8},
       pages={3311\ndash 3348},
         url={http://dx.doi.org/10.1016/j.jde.2011.01.027},
      review={\MR{2772392}},
}

\bib{ChenFrid}{article}{
      author={Chen, {G.-Q.}},
      author={Frid, {H.}},
       title={divergence-measure fields and hyperbolic conservation laws},
        date={1999},
        ISSN={0003-9527},
     journal={Arch. Ration. Mech. Anal.},
      volume={147},
      number={2},
       pages={89\ndash 118},
         url={http://dx.doi.org/10.1007/s002050050146},
      review={\MR{1702637}},
}

\bib{ChFr1}{article}{
      author={Chen, {G.-Q.}},
      author={Frid, {H.}},
       title={Extended divergence-measure fields and the {E}uler equations for
  gas dynamics},
        date={2003},
        ISSN={0010-3616},
     journal={Comm. Math. Phys.},
      volume={236},
      number={2},
       pages={251\ndash 280},
         url={http://dx.doi.org/10.1007/s00220-003-0823-7},
      review={\MR{1981992}},
}

\bib{ChTo2}{article}{
      author={Chen, {G.-Q.}},
      author={Torres, {M.}},
       title={divergence-measure fields, sets of finite perimeter, and
  conservation laws},
        date={2005},
        ISSN={0003-9527},
     journal={Arch. Ration. Mech. Anal.},
      volume={175},
      number={2},
       pages={245\ndash 267},
         url={http://dx.doi.org/10.1007/s00205-004-0346-1},
      review={\MR{2118477}},
}

\bib{ChTo}{article}{
      author={Chen, {G.-Q.}},
      author={Torres, {M.}},
       title={On the structure of solutions of nonlinear hyperbolic systems of
  conservation laws},
        date={2011},
        ISSN={1534-0392},
     journal={Commun. Pure Appl. Anal.},
      volume={10},
      number={4},
       pages={1011\ndash 1036},
         url={http://dx.doi.org/10.3934/cpaa.2011.10.1011},
      review={\MR{2787432 
      }},
}

\bib{ChToZi}{article}{
      author={Chen, {G.-Q.}},
      author={Torres, {M.}},
      author={Ziemer, {W.P.}},
       title={Gauss-{G}reen theorem for weakly differentiable vector fields,
  sets of finite perimeter, and balance laws},
        date={2009},
        ISSN={0010-3640},
     journal={Comm. Pure Appl. Math.},
      volume={62},
      number={2},
       pages={242\ndash 304},
         url={http://dx.doi.org/10.1002/cpa.20262},
      review={\MR{2468610}},
}

\bib{CCDM}{article}{
      author={Comi, {G.E.}},
      author={Crasta, {G.}},
      author={De~Cicco, {V.}},
      author={Malusa, {A.}},
       title={Representation formulas of pairings between divergence-measure
  vector fields and {BV} functions},
        date={2024},
         journal={J. Funct. Anal.},
         volume={286},
         number={1},
       	 pages={Paper No. 110192, 32},
      review={\MR{4654016}},
}

\bib{CDS}{misc}{
      author={Comi, {G.E.}},
      author={De~Cicco, {V.}},
      author={Scilla, {G.}},
       title={Beyond ${\rm BV} $: new pairings and Gauss--Green formulas for measure fields with divergence measure
       },
        date={2023},
        note={preprint arXiv:2310.18730},
}

\bib{ComiLeonardi}{misc}{
      author={Comi, {G.E.}},
      author={Leonardi, {G.P.}},
       title={Measures in the dual of {BV}: perimeter bounds and relations with divergence-measure fields},
        date={2024},
        note={arXiv:2407.06224},
}

\bib{ComiPayne}{article}{
      author={Comi, {G.E.}},
      author={Payne, {K.R.}},
       title={On locally essentially bounded divergence measure fields and sets
  of locally finite perimeter},
        date={2020},
     journal={Adv. Calc. Var.},
         ISSN={},
      volume={13},
       pages={179\ndash 217},
         url={https://doi.org/10.1515/acv-2017-0001},
      review={\MR{4085710}},
}

\bib{CD2}{article}{
      author={Crasta, {G.}},
      author={De~Cicco, {V.}},
       title={On the chain rule formulas for divergences and applications to
  conservation laws},
        date={2017},
        ISSN={0362-546X},
     journal={Nonlinear Anal.},
      volume={153},
       pages={275\ndash 293},
         url={https://doi.org/10.1016/j.na.2016.10.005},
      review={\MR{3614672}},
}

\bib{CD3}{article}{
      author={Crasta, {G.}},
      author={De~Cicco, {V.}},
       title={Anzellotti's pairing theory and the {G}auss--{G}reen theorem},
        date={2019},
        ISSN={0001-8708},
     journal={Adv. Math.},
      volume={343},
       pages={935\ndash 970},
         url={https://doi.org/10.1016/j.aim.2018.12.007},
      review={\MR{3892346}},
}

\bib{CD4}{article}{
      author={Crasta, {G.}},
      author={De~Cicco, {V.}},
       title={An extension of the pairing theory between divergence-measure
  fields and {BV} functions},
        date={2019},
        ISSN={0022-1236},
     journal={J. Funct. Anal.},
      volume={276},
      number={8},
       pages={2605\ndash 2635},
         url={https://doi.org/10.1016/j.jfa.2018.06.007},
      review={\MR{3926127}},
}

\bib{CDM}{article}{
      author={Crasta, {G.}},
      author={De~Cicco, {V.}},
      author={Malusa, {A.}},
       title={Pairings between bounded divergence-measure vector fields and bv
  functions},
        date={2022},
     journal={Adv. Calc. Var.},
       volume={15},
      number={4},
       pages={787\ndash 810},
      review={\MR{4489603}},
}

\bib{DM1980}{article}{
      author={Dal~Maso, {G.}},
       title={Integral representation on {${\rm BV}(\Omega )$} of {$\Gamma
  $}-limits of variational integrals},
        date={1979/80},
        ISSN={0025-2611},
     journal={Manuscripta Math.},
      volume={30},
      number={4},
       pages={387\ndash 416},
         url={https://doi.org/10.1007/BF01301259},
      review={\MR{567216}},
}

\bib{DM}{book}{
      author={Dal~Maso, {G.}},
       title={An introduction to {$\Gamma$}--convergence},
   publisher={Birkh\"auser},
     address={Boston},
        date={1993},
}

\bib{DCbumi}{article}{
      author={De~Cicco, {V.}},
       title={Lower semicontinuity for certain integral functionals on {${\rm
  BV}(\Omega)$}},
        date={1991},
     journal={Boll. Un. Mat. Ital. B (7)},
      volume={5},
      number={2},
       pages={291\ndash 313},
      review={\MR{1111124}},
}

\bib{DFV1}{article}{
      author={De~Cicco, {V.}},
      author={Fusco, {N.}},
      author={Verde, {A.}},
       title={On {$L^1$}-lower semicontinuity in {BV}},
        date={2005},
        ISSN={0944-6532},
     journal={J. Convex Anal.},
      volume={12},
      number={1},
       pages={173\ndash 185},
      review={\MR{2135805}},
}

\bib{DFV}{article}{
      author={De~Cicco, {V.}},
      author={Fusco, {N.}},
      author={Verde, {A.}},
       title={A chain rule formula in {$BV$} and application to lower
  semicontinuity},
        date={2007},
        ISSN={0944-2669},
     journal={Calc. Var. Partial Differential Equations},
      volume={28},
      number={4},
       pages={427\ndash 447},
         url={https://doi.org/10.1007/s00526-006-0048-7},
      review={\MR{2293980}},
}

\bib{dcl}{article}{
      author={De~Cicco, V.},
      author={Leoni, G.},
       title={A chain rule in {$L^1({\rm div};\Omega)$} and its applications to
  lower semicontinuity},
        date={2004},
        ISSN={0944-2669},
     journal={Calc. Var. Partial Differential Equations},
      volume={19},
      number={1},
       pages={23\ndash 51},
         url={http://dx.doi.org/10.1007/s00526-003-0192-2},
      review={\MR{MR2027846 
      }},
}

\bib{DGMM}{article}{
      author={Degiovanni, {M.}},
      author={Marzocchi, {A.}},
      author={Musesti, {A.}},
       title={Cauchy fluxes associated with tensor fields having divergence
  measure},
        date={1999},
        ISSN={0003-9527},
     journal={Arch. Ration. Mech. Anal.},
      volume={147},
      number={3},
       pages={197\ndash 223},
         url={http://dx.doi.org/10.1007/s002050050149},
      review={\MR{1709215}},
}

\bib{Fed}{book}{
      author={Federer, H.},
       title={Geometric measure theory},
      series={Die Grundlehren der mathematischen Wissenschaften, Band 153},
   publisher={Springer-Verlag New York Inc., New York},
        date={1969},
      review={\MR{MR0257325 
      }},
}

\bib{FL}{article}{
      author={Fonseca, {I.}},
      author={Leoni, {G.}},
       title={Some remarks on lower semicontinuity},
        date={2000},
        ISSN={0022-2518},
     journal={Indiana Univ. Math. J.},
      volume={49},
      number={2},
       pages={617\ndash 635},
         url={https://doi.org/10.1512/iumj.2000.49.1791},
      review={\MR{1793684}},
}

\bib{FL1}{article}{
      author={Fonseca, {I.}},
      author={Leoni, {G.}},
       title={On lower semicontinuity and relaxation},
        date={2001},
        ISSN={0308-2105},
     journal={Proc. Roy. Soc. Edinburgh Sect. A},
      volume={131},
      number={3},
       pages={519\ndash 565},
         url={https://doi.org/10.1017/S0308210500000998},
      review={\MR{1838501}},
}
\bib{FM}{article}{
      author={Fonseca, {I.}},
      author={M\"{u}ller, {S.}},
       title={Relaxation of quasiconvex functionals in {${\rm
              BV}(\Omega,{\bf R}^p)$} for integrands {$f(x,u,\nabla u)$}},
        date={1993},
        ISSN= {0003-9527},
     journal= {Arch. Ration. Mech. Anal.},
      volume={123},
      number={1},
       pages={1\ndash 49},
         URL = {https://doi.org/10.1007/BF00386367},
      review={\MR{1218685}},
}

\bib{HI}{article}{
      author={Huisken, {G.}},
      author={Ilmanen, {T.}},
       title={The inverse mean curvature flow and the {R}iemannian {P}enrose
  inequality},
        date={2001},
        ISSN={0022-040X},
     journal={J. Differential Geom.},
      volume={59},
      number={3},
       pages={353\ndash 437},
         url={http://projecteuclid.org/euclid.jdg/1090349447},
      review={\MR{1916951}},
}

\bib{LeonardiComi}{misc}{
      author={Leonardi, {G.P.}},
      author={Comi, {G.}},
       title={The prescribed mean curvature measure equation in non-parametric form},
        date={2023},
        note={arXiv:2302.10592},
}

\bib{LeoSar2}{article}{
      author={Leonardi, {G.P.}},
      author={Saracco, {G.}},
       title={Rigidity and trace properties of divergence-measure vector
  fields},
        date={2022},
     ISSN={1864-8258,1864-8266},
     journal={Adv. Calc. Var.},
      volume={15},
      number={1},
       pages={133\ndash 149},
         URL = {https://doi.org/10.1515/acv-2019-0094}, 
      review={MR{4385590}},
}

\bib{LeoSar}{article}{
      author={Leonardi, {G.P.}},
      author={Saracco, {G.}},
       title={The prescribed mean curvature equation in weakly regular
  domains},
        date={2018},
        ISSN={1021-9722},
     journal={NoDEA Nonlinear Differential Equations Appl.},
      volume={25},
      number={2},
       pages={Art. 9, 29},
         url={https://doi.org/10.1007/s00030-018-0500-3},
      review={\MR{3767675}},
}

\bib{Mazon2016}{article}{
      author={Maz\'{o}n, {J.M.}},
       title={The {E}uler-{L}agrange equation for the anisotropic least
  gradient problem},
        date={2016},
        ISSN={1468-1218},
     journal={Nonlinear Anal. Real World Appl.},
      volume={31},
       pages={452\ndash 472},
         url={https://doi.org/10.1016/j.nonrwa.2016.02.009},
      review={\MR{3490852}},
}

\bib{Pan1}{article}{
      author={Panov, E.~Yu.},
       title={Existence and strong pre-compactness properties for entropy
  solutions of a first-order quasilinear equation with discontinuous flux},
        date={2010},
        ISSN={0003-9527},
     journal={Arch. Ration. Mech. Anal.},
      volume={195},
      number={2},
       pages={643\ndash 673},
         url={http://dx.doi.org/10.1007/s00205-009-0217-x},
      review={\MR{2592291 
      }},
}

\bib{RS}{article}{
      author={Rindler, F.},
      author={Shaw, G.},
       title={Liftings, {Y}oung measures, and lower semicontinuity},
        date={2019},
        ISSN={0003-9527,1432-0673},
     journal={Arch. Ration. Mech. Anal.},
      volume={232},
      number={3},
       pages={1227\ndash 1328},
      review={\MR{MR3928750
      }},
}

\bib{RS1}{article}{
      author={Rindler, F.},
      author={Shaw, G.},
       title={Relaxation for partially coercive integral functionals with
            linear growth},
        date={2020},
     journal={SIAM J. Math. Anal.},
      volume={52},
      number={5},
       pages={4806\ndash 4860},
      review={\MR{MR4155968
      }},
}

\bib{SchSch}{article}{
      author={Scheven, {C.}},
      author={Schmidt, {T.}},
       title={B{V} supersolutions to equations of 1-{L}aplace and minimal
  surface type},
        date={2016},
        ISSN={0022-0396},
     journal={J. Differential Equations},
      volume={261},
      number={3},
       pages={1904\ndash 1932},
         url={http://dx.doi.org/10.1016/j.jde.2016.04.015},
      review={\MR{3501836}},
}

\bib{SchSch2}{misc}{
      author={Scheven, {C.}},
      author={Schmidt, {T.}},
       title={An {A}nzellotti type pairing for divergence-measure fields and a
  notion of weakly super-1-harmonic functions},
        date={2017},
        note={Preprint},
}

\bib{Schu}{article}{
      author={Schuricht, {F.}},
       title={A new mathematical foundation for contact interactions in
  continuum physics},
        date={2007},
        ISSN={0003-9527},
     journal={Arch. Ration. Mech. Anal.},
      volume={184},
      number={3},
       pages={495\ndash 551},
         url={http://dx.doi.org/10.1007/s00205-006-0032-6},
      review={\MR{2299760}},
}

\bib{Serrin61}{article}{
      author={Serrin, {J.}},
       title={On the definition and properties of certain variational
  integrals},
        date={1961},
        ISSN={0002-9947},
     journal={Trans. Amer. Math. Soc.},
      volume={101},
       pages={139\ndash 167},
         url={https://doi.org/10.2307/1993416},
      review={\MR{138018}},
}

\bib{Silh}{article}{
      author={\v{S}ilhav\'{y}, {M.}},
       title={Divergence measure fields and {C}auchy's stress theorem},
        date={2005},
        ISSN={0041-8994},
     journal={Rend. Sem. Mat. Univ. Padova},
      volume={113},
       pages={15\ndash 45},
      review={\MR{2168979}},
}

\end{biblist}
\end{bibdiv}
\end{document}